\documentclass[table]{amsart}
\usepackage[margin=1.1in]{geometry}
\usepackage{amscd,amsmath,amsxtra,amsthm,amssymb,stmaryrd,xr,mathrsfs,mathtools,enumerate,commath, comment, mathtools}
\usepackage{tikz}
\usetikzlibrary{arrows.meta,decorations.markings,shapes.geometric,positioning}
\usetikzlibrary{calc}
\usepackage{tikz-3dplot}
\usepackage{stmaryrd}
\usepackage{multirow}
\usepackage{xcolor}
\usepackage{commath}
\usepackage{orcidlink}
\usepackage{comment}
\usepackage{svg}
\usepackage{graphics}
\usepackage{longtable} 
\usepackage{pdflscape} 
\usepackage{booktabs}
\usepackage{hyperref}
\definecolor{vegasgold}{rgb}{0.77, 0.7, 0.35}
\definecolor{darkgoldenrod}{rgb}{0.72, 0.53, 0.04}
\definecolor{gold(metallic)}{rgb}{0.83, 0.69, 0.22}
\hypersetup{
 colorlinks=true,
 linkcolor=darkgoldenrod,
 filecolor=brown,      
 urlcolor=gold(metallic),
 citecolor=darkgoldenrod,
 }
\newtheorem{lthm}{Theorem}

\usepackage[all,cmtip]{xy}

\DeclareFontFamily{U}{wncy}{}
\DeclareFontShape{U}{wncy}{m}{n}{<->wncyr10}{}
\DeclareSymbolFont{mcy}{U}{wncy}{m}{n}
\DeclareMathSymbol{\Sh}{\mathord}{mcy}{"58}
\usepackage[T2A,T1]{fontenc}
\usepackage[OT2,T1]{fontenc}

\newtheorem{theorem}{Theorem}[section]

\newtheorem*{theorem*}{Theorem}
\newtheorem*{ass*}{Assumption}
\newtheorem{definition}[theorem]{Definition}

\newtheorem{proposition}[theorem]{Proposition}

\newcommand{\Z}{\mathbb{Z}}
\newcommand{\Q}{\mathbb{Q}}

\newcommand{\op}[1]{\operatorname{#1}}

 \DeclareMathSymbol{\sha}{\mathord}{mcy}{"58}
 \makeatletter
\newcommand{\mylabel}[2]{#2\def\@currentlabel{#2}\label{#1}}
\makeatother

\numberwithin{equation}{section}

\title[Multivariable automatic arrays]{Multivariable automatic arrays and transcendence}

\author[A.~Paul]{Aadrita Paul\, \orcidlink{0009-0000-6958-3310}}
\address{Chennai Mathematical Institute, Chennai, India}
\email{aadritap.ug2023@cmi.ac.in}
\author[A.~Ray]{Anwesh Ray\, \orcidlink{0000-0001-6946-1559}}
\address{Chennai Mathematical Institute, Chennai, India}
\email{anwesh@cmi.ac.in}

\keywords{automatic sequences, transcendence, Schmidt's subspace theorem}
\subjclass[2020]{11B85, 11Y16 (Primary), 11R04, 11A63 (Secondary)}

\begin{document}

\maketitle

\begin{abstract}
We study real numbers defined by multidimensional automatic arrays weighted by multiplicatively independent bases. Let $a_1, \dots, a_r\geq 2$ be integers such that $\log a_1, \dots, \log a_r$ are $\Q$-linearly independent. Given bounded automatic sequences $(p_n(i))_{n\geq 0}$ with $i=1, \dots , r$ and a function $f:\Z^r\rightarrow \Z$, we consider the associated series
\[
\alpha = \sum_{n_1,\dots,n_r \geq 0} \frac{f(p_{n_1}(1),\dots,p_{n_r}(r))}{a_1^{n_1}\cdots a_r^{n_r}}.
\]
Using combinatorial properties of automatic sequences and Schmidt's Subspace Theorem, we prove that $\alpha$ is either rational or transcendental. This extends a result of Adamczewski and Bugeaud to the multidimensional setting.
\end{abstract}

\section{Introduction}
\subsection{Historical context and motivation}
\par The interaction between automata theory and Diophantine approximation has produced a number of striking results in transcendence theory. A classical way to measure the combinatorial complexity of an infinite word $\mathbf{u} = (u_n)_{n \geq 0}$ over a finite alphabet is via its subword complexity function $p(n)$, which counts the number of distinct blocks of length $n$ occurring in $\mathbf{u}$. This notion has proved to be a powerful bridge between combinatorics on words, theoretical computer science and Diophantine approximation. For instance, the $b$-ary expansion of a normal real number satisfies $p(n)=b^n$ for all $n$, reflecting maximal complexity. Ferenczi and Mauduit \cite{FerencziMauduit} established the transcendence of real numbers whose $b$-ary expansion has minimal complexity, namely $p(n)=n+1$ for all $n \geq 1$. Such sequences are precisely the \emph{Sturmian sequences} introduced by Morse and Hedlund \cite{MorseHedlund1, MorseHedlund2}. Their work relies on a refinement of a theorem of Ridout \cite{Ridout} and has subsequently inspired a number of results exhibiting transcendental numbers with low combinatorial complexity; see, for instance, \cite{AlloucheZamboni, Allouche,  RisleyZamboni, AdamczewskiCassaigne}.

\par A real number is said to be computable in time $T(n)$ if its first $n$ digits in some fixed base can be produced by a Turing machine in at most $T(n)$ steps. Of particular interest are numbers computable in linear time, also referred to as real-time computable numbers. Rational numbers clearly fall into this class, and Hartmanis and Stearns \cite{HartmanisStearns} raised the natural question of whether irrational algebraic numbers can also exhibit such low computational complexity. Loxton and van der Poorten \cite{LoxtonVanderPoorten1, LoxtonVanderPoorten2} proved that the base-$b$ expansion of an irrational algebraic number cannot be generated by a finite automaton. Although an initial proof based on Mahler’s method \cite{Mahler1, Mahler2, Mahler3} was proposed, it was later found to contain a substantial gap; see Becker \cite{Becker} and the discussion in \cite{Waldschmidt}. These developments culminated in the work of Adamczewski and Bugeaud \cite{AdamczewskiBugeaud}, who introduced powerful Diophantine methods, based on the Schmidt Subspace Theorem, to study the arithmetic nature of real numbers defined by low-complexity expansions.

\par Sequences generated by finite automata exhibit combinatorial properties which in turn lead to rigid arithmetic constraints on associated generating functions and infinite series. A fundamental illustration of this phenomenon is provided by Christol's theorem, which asserts that a power series $A(X)=\sum_{n\geq 0} a_n X^n$ with coefficients in a finite field of characteristic $p$ is algebraic over the field of rational functions if and only if the sequence $(a_n)_{n\geq 0}$ is $p$-automatic. In a different direction, Adamczewski and Bugeaud proved that if $b\geq 2$ is an integer and $(a_n)_{n\geq 0}$ is a bounded automatic sequence of integers, then the real number
\[
\alpha=\sum_{n\geq 0} \frac{a_n}{b^n}
\]
is either rational or transcendental; moreover, $\alpha$ is rational if and only if $(a_n)_{n\geq 0}$ is eventually periodic.
\subsection{Main results}
\par The present work may be viewed as a natural continuation of this line of inquiry, in which we establish a multidimensional analogue of the above result. To describe our setting, let $r \geq 2$ be an integer and let $a_1,\dots,a_r \geq 2$ be multiplicatively independent integers, that is, the only solution $(m_1,\dots,m_r)\in \mathbb{Z}^r$ to
\[
a_1^{m_1}\cdots a_r^{m_r}=1
\]
is $(m_1,\dots,m_r)=(0,\dots,0)$; equivalently, $\log a_1,\dots,\log a_r$ are linearly independent over $\mathbb{Q}$. In this higher-dimensional setting, we show that analogous rigidity phenomena persist for real numbers defined by multidimensional automatic data.

For each $1 \leq i \leq r$, let $(p_n(i))_{n\geq 0}$ be a bounded sequence with values in $\Z$, and assume that $(p_n(i))$ is $k_i$-automatic for some integer $k_i \geq 2$. Given a function
\[
f:\Z^r \to \Z,
\]
we associate to these data the $r$-dimensional array $\mathbf{c}=(c_{n_1,\dots,n_r})$ defined by
\[
c_{n_1,\dots,n_r} := f\bigl(p_{n_1}(1),\dots,p_{n_r}(r)\bigr),
\]
and consider the corresponding real number
\[
\alpha = \sum_{n_1,\dots,n_r \geq 0} \frac{c_{n_1,\dots,n_r}}{a_1^{n_1}\cdots a_r^{n_r}}.
\]
The array $\mathbf c$ is a special case of an $r$-dimensional automatic array in the sense of \cite[\S14.2]{alloucheshallit}. Our main result is the following.

\begin{lthm}\label{main thm}
Let $r \geq 2$, $a_1,\dots,a_r \geq 2$ and $\alpha$ be as above. Then $\alpha$ is either rational or transcendental.
\end{lthm}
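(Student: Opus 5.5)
The plan is to reduce $\alpha$ to a multilinear expression in one-variable series, then run the Adamczewski--Bugeaud Subspace Theorem argument with several bases at once.

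\textbf{Step 1: reduction to a multilinear form.} Each sequence $(p_n(i))$ is bounded, so it takes values in a finite set $A_i\subset\Z$. Only the restriction of $f$ to $A_1\times\cdots\times A_r$ matters, and on this finite product set
\[
f(x_1,\dots,x_r)=\sum_{v\in A_1\times\cdots\times A_r} f(v)\prod_{i=1}^r \mathbf 1[x_i=v_i].
\]
Setting $\beta_i(w):=\sum_{n\geq 0}\mathbf 1[p_n(i)=w]\,a_i^{-n}$, this gives
\[
\alpha=\sum_{v} f(v)\prod_{i=1}^r \beta_i(v_i).
\]
Each $(\mathbf 1[p_n(i)=w])_n$ is again $k_i$-automatic. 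By Adamczewski--Bugeaud, each $\beta_i(w)$ is therefore rational or transcendental. This alone does not decide $\alpha$, since a polynomial in transcendental numbers may be algebraic. So we go back to the mechanism behind their proof and apply it to all coordinates simultaneously.

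\textbf{Step 2: simultaneous rational approximations.} Fix $i$ and use the combinatorial property of the automatic word $\mathbf p(i)=(p_n(i))_n$ over the finite alphabet $A_i$: there are finite words $U_m,V_m$ with $|U_m|\ll|V_m|$ and $|V_m|\to\infty$ along a geometric-type progression, such that $U_mV_m^{e}$ is a prefix of $\mathbf p(i)$ for some fixed $e>1$. This is a property of the word $\mathbf p(i)$ itself, so it holds simultaneously for all indicator sequences $\mathbf 1[p_n(i)=w]$, $w\in A_i$. Hence every $\beta_i(w)$ admits a rational approximation with the common denominator $a_i^{u_m}(a_i^{v_m}-1)$, where $u_m=|U_m|$ and $v_m=|V_m|$, and with error $\ll a_i^{-(u_m+ev_m)}$. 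If the word is eventually periodic, the $\beta_i(w)$ are rational. By the multilinear formula, $\alpha$ is then approximated with denominator
\[
Q=\prod_i a_i^{u_{m_i}}\bigl(a_i^{v_{m_i}}-1\bigr).
\]
The error is a sum of $r$ terms, one for each coordinate, bounded by the product of the other $\beta$'s. Expanding $Q$ gives $2^r$ signed $S$-unit monomials $\pm\prod_i a_i^{\epsilon_i}$, where $S$ consists of $\infty$ and the primes dividing $a_1\cdots a_r$.

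\textbf{Step 3: Subspace Theorem.} Assume $\alpha$ is algebraic. Form the integer vector $\mathbf x$ consisting of the $2^r$ monomials of $Q$ together with the numerator $P$. Consider the linear forms $Q\alpha-P$ at the infinite place, plus coordinate forms $x_j$ at the other places of $S$. At each finite place the $x_j$ are $S$-units, so their $p$-adic sizes are controlled. Choose the indices $m_1,\dots,m_r$ so that the sizes $a_i^{u_{m_i}+ev_{m_i}}$ are mutually comparable. This is where the growth properties of the automatic sequences and the logarithms $\log a_i$ are used, via a Dirichlet-type choice along each progression. With this choice, $|Q\alpha-P|\le \|\mathbf x\|^{-\delta}\prod_i a_i^{-u_{m_i}}$ for some fixed $\delta>0$, which makes the product of the forms over $S$ at most $H(\mathbf x)^{-\varepsilon}$. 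The Subspace Theorem then puts infinitely many of these $\mathbf x$ in a fixed proper subspace, i.e.\ a single linear relation $c_0P+\sum_j c_j M_j=0$ among the monomials $M_j$ holds for infinitely many parameter tuples.

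\textbf{Step 4 (main obstacle): exploiting the degenerate relation.} Divide the relation by $Q$ and let the parameters tend to infinity. Multiplicative independence of $a_1,\dots,a_r$ makes all the monomials $\prod a_i^{\epsilon_i}$ pairwise distinct with different growth orders. So the relation cannot hold through cancellation among monomials. One would first try a vanishing-subsum argument in the style of Evertse's $S$-unit theorem, or directly compare dominant terms. The expected conclusion is that $\alpha$ is forced to equal a fixed rational $-c_j/c_0$ in the limit, so $\alpha\in\Q$. I expect this step, together with the compatible choice of lengths across distinct bases in Step 3, to be the main technical difficulty: the $r$ bases have unrelated scales, and the error terms from each coordinate must all be dominated simultaneously.
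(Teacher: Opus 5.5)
Your outline is essentially the paper's: periodic approximants in every coordinate, the common denominator $Q=\prod_i a_i^{u_i}(a_i^{v_i}-1)$, a $(2^r+1)$-vector built from the monomials of $Q$ and the numerator, and an $S$-unit endgame. There is a genuine gap, though, and it is in Step~3. No choice of $m_1,\dots,m_r$ repairs it.

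As you note in Step~2, $\alpha-P/Q$ has one contribution per coordinate. Put $X_i:=a_i^{u_i+v_i}$ and $\theta_i:=(u_i+ev_i)/(u_i+v_i)\le e$. The only available bound is then $|\alpha-P/Q|\ll\sum_i X_i^{-\theta_i}$, and in general it is sharp. For instance, with $f(x_1,x_2)=x_1x_2$ one has $\beta_1\beta_2-\hat\beta_1\hat\beta_2=(\beta_1-\hat\beta_1)\beta_2+\hat\beta_1(\beta_2-\hat\beta_2)$, two terms of unrelated sizes. Multiplying by $Q\asymp\prod_j X_j$ gives
\[
|Q\alpha-P|\asymp\sum_{i=1}^r X_i^{1-\theta_i}\prod_{j\neq i}X_j .
\]
For this merely to tend to $0$ you need $(\theta_i-1)\log X_i>\sum_{j\neq i}\log X_j$ for every $i$. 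Summing over $i$ forces $\max_i\theta_i>r$, hence $e>r$. Stammering only provides some $e>1$; the pigeonhole argument gives $1+1/|\mathcal B|$. The Thue--Morse word, for example, is overlap-free, so it has no prefix $UV^{e}$ with $e>2$. Hence your inequality $|Q\alpha-P|\le\|\mathbf x\|^{-\delta}\prod_i a_i^{-u_{m_i}}$ is false in general, and the Subspace Theorem is never triggered.

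You also cannot split $Q\alpha-P$ into $r$ separately small forms. Those forms would need the $\beta_i(w)$ as coefficients, and these may be transcendental, whereas the Subspace Theorem requires algebraic coefficients. Your own Step~1 shows what is at stake. For $f(x_1,x_2)=x_1x_2$ the statement says that a product of two automatic numbers in multiplicatively independent bases is rational or transcendental. That is an algebraic-independence question, which a single small linear form with exponent $e$ close to $1$ cannot detect.

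The paper's proof does not supply the missing estimate either. It writes $\alpha-\alpha_n$ as a sum over the region where every $n_i\ge r_n(i)+\lceil ws_n(i)\rceil+1$, and concludes $L_{\infty,2^r+1}(\mathbf x_n)=O\bigl(\prod_i a_i^{-(w-1)s_n(i)}\bigr)$. But $c_{\mathbf n}=c^{(n)}_{\mathbf n}$ is only guaranteed when all $n_i$ lie inside the repetition. So the difference is supported on the union of the regions where some single $n_i$ is large (take $f(x_1,\dots,x_r)=x_1$), and the true size is the one displayed above.

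Finally, your Step~4, which you flag as the main obstacle, is the routine part. Let $c_\emptyset$ be the coefficient of the leading monomial $\prod_i a_i^{u_i+v_i}$. If $c_0\neq0$, divide by $Q$ and let the parameters grow to get $c_0\alpha+c_\emptyset=0$. If $c_0=0$, the $S$-unit theorem, together with multiplicative independence and the growth of the $v_i$, gives a contradiction exactly as in the paper.
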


The study of such numbers brings together methods from the theory of automatic sequences and higher-dimensional Diophantine approximation. A key feature of automatic sequences is that they satisfy a stammering condition: arbitrarily long prefixes contain large repeated blocks with controlled overlap (cf.\ \cite[proof of Theorem~2]{AdamczewskiBugeaud}). On the Diophantine side, the Schmidt Subspace Theorem provides a powerful framework for treating simultaneous approximation problems. For each $n$ we define an auxiliary rational number $\alpha_n$ by replacing each sequence $(p_k(i))_{k\geq 0}$ with an eventually periodic sequence. These rationals $\alpha_n$ admit explicit rational expressions with controlled denominators. The stammering property ensures that $\alpha_n$ approximates $\alpha$ with sufficient precision. These approximations are then shown, via a suitable application of the Subspace Theorem to preclude the possibility that $\alpha$ is algebraic and irrational.
\subsection{Outlook}
\par Our main result naturally motivates the following question: Let $a_1, \dots, a_r\geq 2$ be multiplicatively independent integers. Given integers $k_1, \dots, k_r \geq 2$ and any bounded $[k_1, \dots, k_r]$-automatic array $\mathbf{c}=(c_{n_1,\dots,n_r})$ of integers, can one still expect the associated series
\[
\sum_{n_1,\dots,n_r \geq 0} \frac{c_{n_1,\dots,n_r}}{a_1^{n_1}\cdots a_r^{n_r}}
\]
to be either rational or transcendental? We hope that the methods developed here will stimulate further investigation into multidimensional analogues of classical problems in transcendence theory.

\subsection*{Data availability} This manuscript has no associated data.
\subsection*{Conflict of interest statement} There is no conflict of interest to report.

\section{Automatic sequences and their properties}

\subsection{Words and morphisms}\label{s 2.1}
\par Let $\Sigma$ be a nonempty finite set, which we call an \emph{alphabet}. For an integer $k \geq 2$, set $\Sigma_k := \{0,1,\dots,k-1\}$, interpreted as the set of residue classes modulo $k$. A (finite) \emph{word} over $\Sigma$ is a sequence $w = w_0 \cdots w_{\ell}$ with $w_i \in \Sigma$. Its \emph{length} is given by $|w| := \ell + 1$. We write $\Sigma^*$ for the set of all finite words over $\Sigma$, including the empty word $\varepsilon$. The set $\Sigma^*$ is a monoid with respect to concatenation, i.e., for $x = x_1 \cdots x_n\in \Sigma^*$ and $y = y_1 \cdots y_m\in \Sigma^*$, set
\[
xy := x_1 \cdots x_n y_1 \cdots y_m.
\]
\noindent Given $x\in \Sigma^*$ and $z\in \Sigma^*$, we say that $x$ is a \emph{prefix} of $z$ if $z=xy$ for some word $y\in \Sigma^*$. 
\par The set of all \emph{infinite words} in the alphabet $\Sigma$ is denoted $\Sigma^\omega$ and consists of words of the form $w=w_0w_1\dots w_n\dots$. Given $x=x_1\dots x_n\in \Sigma^*$, denote by $x^\omega$ the infinite periodic word 
\[x^\omega=xxx\cdots.\]We may identify $\Sigma^\omega$ with the set of functions from $\Z_{\geq 0}$ to $\Sigma$, mapping $n\mapsto w_n$. We shall set $\Sigma^\infty:=\Sigma^*\cup \Sigma^\omega$. 
\par Let $\Sigma$ and $\Delta$ be alphabets. A function $\varphi: \Sigma^*\rightarrow \Delta^*$ is called a \emph{morphism} if it respects the monoidal structure, i.e., $\varphi(xy)=\varphi(x)\varphi(y)$ for all $x,y \in \Sigma^*$. Suppose that there exists $k\in \Z_{\geq 2}$ such that 
\[|\varphi(a)|=k\quad\text{ for all } a\in \Sigma.\] In this case, $\varphi$ is said to be \emph{k-uniform}. A $1$-uniform morphism is called a \emph{coding} and arises from a function $\varphi: \Sigma\rightarrow \Delta$.
\par Suppose that $\Sigma=\Delta$ and let $\varphi:\Sigma^*\rightarrow \Sigma^*$ be a $k$-uniform morphism. Note that $\varphi$ natural extends to a function $\varphi^\omega: \Sigma^\omega\rightarrow \Sigma^\omega$ defined by 
\[\varphi^\omega(w_0 w_1 \dots w_n \dots):=\varphi(w_0)\varphi(w_1)\dots \varphi(w_n)\dots.\]Let $a\in \Sigma$ be such that $\varphi(a)=ax$ for some $x\in \Sigma^*$ with $|x|=k-1$. Then $\varphi$ is said to be \emph{prolongable} on $a$. In this case the infinite word 
\[w:=\varphi^\omega(a):=ax \varphi(x)\varphi^2(x)\dots \varphi^n(x)\dots \] is fixed by $\varphi$. Indeed, one finds that 
\[\varphi(w)=\varphi\left(ax \varphi(x)\varphi^2(x)\dots \varphi^n(x)\dots\right)=w.\]
\subsection{Languages and DFAOs}
\par Given an alphabet $\Sigma$, a subset $L$ of $\Sigma^*$ is called a \emph{language}. 

\begin{definition}\label{DFA defn}A \emph{deterministic finite automaton} (DFA) over $\Sigma$ is a tuple
\[
\mathcal{A} = (Q,\Sigma,\delta,q_0,F),
\]
where $Q$ is a finite set (the set of states), $\delta : Q \times \Sigma \to Q$ is the transition function, $q_0 \in Q$ is the initial state, and $F \subseteq Q$ is the set of accepting states.
\end{definition}We can think of $\mathcal{A}$ as a finite directed graph for which the states are vertices and the edges are labelled by elements in $\Sigma$. This graph inputs finite words in $\Sigma^*$ and either accepts or declines them. In order to make this precise, we extend the transition function $\delta$ to a map $\delta : Q \times \Sigma^* \to Q$ according to the rules
\[
\delta(q,\varepsilon) = q, \qquad \delta(q,aw) = \delta(\delta(q,a),w)
\]
for $q \in Q$, $a \in \Sigma$, and $w \in \Sigma^*$. Then a word $w \in \Sigma^*$ is \emph{accepted} by $\mathcal{A}$ if $\delta(q_0,w) \in F$. The \emph{language recognized} by $\mathcal{A}$ is
\[
L(\mathcal{A}) := \{\, w \in \Sigma^* \mid \delta(q_0,w) \in F \,\}.
\]

\begin{definition}\label{regular language}A language $L \subseteq \Sigma^*$ is called \emph{regular} if there exists a DFA $\mathcal{A}$ such that $L = L(\mathcal{A})$.
\end{definition}
Given a language $L\subseteq \Sigma^*$, it is natural to ask when it arises from a DFA. Further one would like to investigate the structure of the DFA with the minimal number of states. This can be achieved via the famous Myhill-Nerode theorem, which we now recall. Define an equivalence relation $u \sim_L v$ for $u,v\in \Sigma^*$ if \[uw \in L \iff vw \in L \quad \forall w \in \Sigma^*.\]

\begin{theorem}[Myhill--Nerode]\label{myhill nerode}
With respect to notation above, the following are equivalent:
\begin{enumerate}
\item $L$ is regular.
\item The relation $\sim_L$ has finitely many equivalence classes.
\end{enumerate}
\end{theorem}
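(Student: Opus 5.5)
The plan is to prove both directions separately, with (1)$\Rightarrow$(2) coming straight from the finiteness of the state set and (2)$\Rightarrow$(1) by constructing an automaton on the equivalence classes of $\sim_L$.

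\textbf{(1)$\Rightarrow$(2).} Suppose $L = L(\mathcal{A})$ for a DFA $\mathcal{A}=(Q,\Sigma,\delta,q_0,F)$. I would attach to each word $u\in\Sigma^*$ the state $\delta(q_0,u)\in Q$. By induction on $|u|$, the recursive definition of the extended transition function gives
\[
\delta(q_0,uw)=\delta(\delta(q_0,u),w) \quad \text{for all } u,w\in\Sigma^*.
\]
Hence if $\delta(q_0,u)=\delta(q_0,v)$, then for every $w$ we get $\delta(q_0,uw)=\delta(q_0,vw)$. So $uw\in L$ if and only if $vw\in L$, that is, $u\sim_L v$. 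Thus $u\mapsto\delta(q_0,u)$ induces a surjection from $Q$ (more precisely, from its image) onto $\Sigma^*/\!\sim_L$. Consequently the number of classes is at most $|Q|<\infty$.

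\textbf{(2)$\Rightarrow$(1).} Assume $\sim_L$ has finitely many classes and write $[u]$ for the class of $u$. I would build the DFA with
\[
Q:=\Sigma^*/\!\sim_L,\qquad q_0:=[\varepsilon],\qquad F:=\{[u]: u\in L\},\qquad \delta([u],a):=[ua].
\]
The key steps, in order, are:
\begin{enumerate}
\item $\sim_L$ is right-invariant: if $u\sim_L v$ then $ua\sim_L va$. This holds because $(ua)w=u(aw)$, so it suffices to test the definition on suffixes of the form $aw$. Right-invariance makes $\delta$ well defined.
\item $F$ is well defined: taking $w=\varepsilon$ in the definition shows $u\sim_L v$ implies ($u\in L \iff v\in L$). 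So $L$ is a union of classes.
\item By induction on $|w|$, the extended transition function satisfies $\delta([u],w)=[uw]$. In particular $\delta(q_0,w)=[w]$.
\end{enumerate}
It then follows that $w\in L(\mathcal{A})$ if and only if $[w]\in F$, which by step 2 holds if and only if $w\in L$. So $L$ is regular.

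There is no real obstacle here. The only points needing care are the well-definedness checks, namely right-invariance for $\delta$ and saturation of $L$ by $\sim_L$ for $F$, and the small induction showing the extended transition function behaves as $\delta([u],w)=[uw]$. The rest is unwinding definitions.
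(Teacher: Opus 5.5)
Your proof is correct, and it is the standard argument. For (1)$\Rightarrow$(2), the map $u\mapsto\delta(q_0,u)$ shows that $\sim_L$ has at most $|Q|$ classes. For (2)$\Rightarrow$(1), the quotient automaton on $\Sigma^*/\!\sim_L$ recognizes $L$; it is well defined by right-invariance and because $L$ is a union of classes, and the induction $\delta([u],w)=[uw]$ matches the paper's left-to-right extension rule. The paper gives no proof of this theorem. It only recalls it as a classical result, as motivation for the $k$-kernel criterion, so there is no alternative route to compare against.
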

One can think of a DFA $\mathcal{A}$ as a machine which gives rise to a function $f_{\mathcal{A}}: \Sigma^*\rightarrow \{0, 1\}$, where $f(w)=1$ if $w$ is accepted and $f(w)=0$ if not. We shall consider functions $f: \Sigma^*\rightarrow \Delta$, where $\Delta$ is a finite alphabet which arise from DFAs with output. 
\begin{definition}
A \emph{DFAO} (deterministic finite automaton with output) is a $6$-tuple
\[
\mathcal{A} = (Q,\Sigma,\delta,q_0,\Delta,\tau),
\]
where $Q$ is a finite set of states, $\Sigma$ is the input alphabet, $\delta:Q\times \Sigma \to Q$ is the transition function, $q_0 \in Q$ is the initial state, $\Delta$ is a finite output alphabet, and $\tau:Q \to \Delta$ is the output function. If $\Sigma=\Sigma_k$, we call $\mathcal{A}$ a k-DFAO.
\end{definition}

As in the DFA case, the transition function $\delta$ extends uniquely to a map
\[
\delta: Q \times \Sigma^* \to Q
\]
according to the rules
\[
\delta(q,\varepsilon)=q, \qquad \delta(q,aw)=\delta(\delta(q,a),w),
\]
for $q \in Q$, $a \in \Sigma$, and $w \in \Sigma^*$. Consider the function $f_{\mathcal{A}}: \Sigma^*\rightarrow \Delta$ defined by 
\[f_{\mathcal{A}}(w):=\tau\left(\delta(q_0, w)\right).\] A function $f: \Sigma^*\rightarrow \Delta$ that arises from a DFAO $\mathcal{A}$ is called a \emph{finite state function}.

\par Next we recall the definition of an automatic sequences in a finite alphabet $\Delta$. Given a nonzero integer $n \geq 0$, we write \[n = \sum_{i=0}^r w_i k^i\] with $w_i \in \Sigma_k$ and $w_r \neq 0$. We associate to $n$ the word
\[
(n)_k := w_0 w_1 \cdots w_r \in \Sigma_k^*.
\] If $n=0$ we simply set $(0)_k:=0$. On the other hand, given a string $w_0\dots w_j\in \Sigma_k^*$, we set 
\begin{equation}\label{bracket k notation}[w_0\dots w_j]_k:=\sum_{i=0}^j w_i k^i.\end{equation}

\begin{definition}
A sequence $\mathbf{a}=(a_n)_{n \geq 0}$ with values in a finite alphabet $\Delta$ is called \emph{$k$-automatic} if there exists a $k$-DFAO $\mathcal{A}=(Q,\Sigma_k,\delta,q_0,\mathfrak{A},\tau)$ such that
\[
a_n = \tau\bigl(\delta(q_0,(n)_k)\bigr)
\]
for all $n \geq 0$.
\end{definition}
\noindent Equivalently, a sequence is $k$-automatic if its $n$-th term is obtained by feeding the base-$k$ expansion of $n$ into a finite automaton with output and reading the resulting state through the map $\tau$. For background and further properties, see \cite[\S4.3]{alloucheshallit}.
\par For a sequence $\mathbf{a}=(a_n)_{n \geq 0}$, define the $k$-kernel of $\mathbf{a}$ to be the set of subsequences \[K_k(\mathbf{a}):= \{(u_{k^in+j})_{n\geq 0}: i \geq 0\text{ and }0 \leq j \leq k^i\}.\]
The following result shows that the $k$-automaticity property is detected by the $k$-kernel.
\begin{theorem}
Let $k\geq2$. The sequence $\mathbf{a}=(a_n)_{n \geq 0}$ is $k$-automatic if and only if $K_k(\mathbf{a})$ is finite.
\end{theorem}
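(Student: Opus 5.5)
The plan is to prove both directions through the Myhill--Nerode style correspondence between states of a $k$-DFAO and subsequences of the form $(a_{k^in+j})_{n\ge0}$. The key observation is that the base-$k$ word $(k^in+j)_k$, for $0\le j<k^i$, is the word of length $i$ encoding $j$ (padded with trailing zeros, least significant digit first), followed by $(n)_k$. We must handle the edge cases where $n=0$ and the padding matters. We also need to deal with $j=k^i$, which the paper's definition allows. Since $(a_{k^in+k^i})_n=(a_{k^i(n+1)})_n$ is a shift of a kernel element, we will either restrict to $0\le j<k^i$ or note that finiteness is unaffected. To get this we will use the recursion $a_{k^{i}(n+1)}$, which for $i\ge1$ equals $a_{k^{i-1}(kn+k)}$; we will check this carefully.

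\emph{Automatic $\Rightarrow$ finite kernel.} First we normalise the automaton so that its output does not change when leading zeros are appended to the input word. In this paper's convention, with least significant digit first, these are trailing zeros. This is the standard fact that one may assume $\delta(q,0)$ preserves the output on the relevant part; we obtain it by replacing $\mathcal A$ with a DFAO whose states record the pair (state, state after reading further zeros), or by the standard lemma in \cite[\S5.2]{alloucheshallit}. Given such an $\mathcal A$, set $w=$ the length-$i$ word for $j$ and $q=\delta(q_0,w)$. Then $a_{k^in+j}=\tau(\delta(q,(n)_k))$, so the subsequence is determined by the state $q\in Q$. Hence $|K_k(\mathbf a)|\le |Q|$.

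\emph{Finite kernel $\Rightarrow$ automatic.} Here we build a DFAO directly: its states are the elements of $K_k(\mathbf a)$, the initial state is $\mathbf a$, and the transition is $\delta(\mathbf b,d)=(b_{kn+d})_{n\ge0}$. This transition lands back in the kernel, because applying it to $(a_{k^in+j})$ gives $(a_{k^{i+1}n+k^id+j})$. The output is $\tau(\mathbf b)=b_0$. By induction, reading a word $w$ from $\mathbf a$ yields $(a_{k^{|w|}n+[w]_k})_n$, so reading $(n)_k$ outputs $a_{[(n)_k]_k}=a_n$.

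The main obstacle is the reading-direction and leading-zero normalisation in the first direction. Since the paper feeds the least significant digit first, appended zeros do not change $n$, but the automaton is only guaranteed correct on canonical representations $(n)_k$. I would handle this first by expanding the state set to $Q\times\{\text{all }Q\}$ as described, and fall back on citing the equivalence of the two conventions in \cite{alloucheshallit}.
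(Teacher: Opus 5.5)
Your argument is correct, and it supplies more than the paper does. The paper does not prove this statement: it quotes it as a standard fact (Eilenberg's theorem, see \cite{alloucheshallit}) and only remarks that for $\{0,1\}$-valued sequences it follows from Myhill--Nerode.

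Your proof is the standard one, and it is exactly the DFAO form of that Myhill--Nerode correspondence. The kernel element $(a_{k^in+j})_{n\ge0}$ is the residual behaviour of $v\mapsto a_{[v]_k}$ after reading the length-$i$ low-order block $w$ encoding $j$. Your finite-kernel automaton, with $\delta(\mathbf b,d)=(b_{kn+d})_{n\ge0}$ and $\tau(\mathbf b)=b_0$, is the corresponding Nerode automaton.

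Compared with the paper's one-line remark, your version covers arbitrary finite output alphabets. It also respects the paper's own conventions: least significant digit first, $(0)_k=0$, and the typo $j\le k^i$. That typo is harmless, and no recursion is needed to see this. The extra sequences $(a_{k^i(n+1)})_{n\ge0}$ are shifts of the $j=0$ members $(a_{k^in})_{n\ge0}$, so both versions of the kernel are finite simultaneously. In the converse direction you may work with the standard kernel, which is closed under your transitions because $k^id+j\le k^{i+1}-1$.

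One simplification: the normalisation you identify as the main obstacle can be avoided entirely. For $n\ge1$, the word $w(n)_k$ already is the canonical expansion of $k^in+j$, since it ends in the nonzero top digit of $n$. Hence $(a_{k^in+j})_{n\ge1}$ is determined by $\delta(q_0,w)$, and the single remaining term $a_j$ lies in the finite set $\Delta$. This gives at most $|Q|\,|\Delta|$ kernel elements with $j<k^i$.

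If you do normalise, your pair construction on $Q\times Q$ works once made precise:
\begin{itemize}
\item after reading $v$, record $\bigl(\delta(q_0,([v]_k)_k),\,\delta(q_0,v)\bigr)$;
\item start at $(\delta(q_0,0),q_0)$, because $(0)_k=0$ is not the empty word;
\item update $(p,q)\mapsto(\delta(q,d),\delta(q,d))$ on a digit $d\ne0$, and $(p,q)\mapsto(p,\delta(q,0))$ on $0$;
\item output $\tau(p)$.
\end{itemize}
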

In the case where $\mathbf{a}$ takes values in $\{0,1\}$, this criterion follows from the Myhill-Nerode Theorem (i.e., Theorem \ref{myhill nerode}).

\par We recall the relationship between automatic sequences and sequences that are fixed under a morphism. 
\begin{theorem}[Cobham]\label{Cobham theorem on morphisms}
Let $\mathbf{a}=(a_n)_{n \geq 0}$ be a sequence with values in a finite alphabet $\Delta$. Then $\mathbf{a}$ is $k$-automatic if and only if there exist
\begin{itemize}
\item a finite alphabet $\mathfrak{B}$,
\item a $k$-uniform morphism $\sigma : \mathfrak{B}^* \to \mathfrak{B}^*$,
\item a letter $b \in \mathfrak{B}$ such that $\sigma(b)=bw$ for some $w \in \mathfrak{B}^*$ (i.e., $\varphi$ is prolongable in $b$ as in \S\ref{s 2.1}),
\item and a coding $\pi : \mathfrak{B} \to \Delta$,
\end{itemize}
such that $\mathbf{a} =\pi(\mathbf b)$ where $\mathbf b:=\sigma^\omega(b)$.
\end{theorem}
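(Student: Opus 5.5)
We will prove Cobham's theorem using the $k$-kernel criterion stated just before it, rather than working directly with the DFAO. This sidesteps the question of whether the automaton reads base-$k$ digits from the least or the most significant end. The key observation in both directions is the relation
\[
(u_{k^{i}(kn+j')+j})_{n\geq 0}=(u_{k^{i+1}n+(k^ij'+j)})_{n\geq 0}.
\]
It says that applying a "$k$-decimation" $n\mapsto kn+j'$ to a kernel element yields another kernel element.

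\emph{($\Rightarrow$)} Suppose $\mathbf a$ is $k$-automatic, so $K_k(\mathbf a)$ is finite by the kernel criterion. List its elements as $\mathbf s^{(1)}=\mathbf a,\mathbf s^{(2)},\dots,\mathbf s^{(m)}$. By the relation above, for each $1\le i\le m$ and each $0\le j<k$ there is an index $\tau(i,j)\in\{1,\dots,m\}$ with $(s^{(i)}_{kn+j})_{n\ge0}=\mathbf s^{(\tau(i,j))}$.

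Take $\mathfrak B:=\Delta^m$ and set $b_n:=(s^{(1)}_n,\dots,s^{(m)}_n)\in\mathfrak B$. For $0\le j<k$ define $\phi_j:\mathfrak B\to\mathfrak B$ by $\phi_j(x_1,\dots,x_m)=(x_{\tau(1,j)},\dots,x_{\tau(m,j)})$, so that $b_{kn+j}=\phi_j(b_n)$. Now define the $k$-uniform morphism $\sigma(x):=\phi_0(x)\phi_1(x)\cdots\phi_{k-1}(x)$. The identity $b_{kn+j}=\phi_j(b_n)$ says exactly that $\sigma(\mathbf b)=\mathbf b$.

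The case $n=0$, $j=0$ gives $\phi_0(b_0)=b_0$, so $\sigma$ is prolongable on $b:=b_0$. Every $n\ge1$ can be written uniquely as $n=kn'+j$ with $n'<n$. Hence, by induction on $n$, a fixed point of $\sigma$ starting with $b_0$ is unique, and therefore $\mathbf b=\sigma^\omega(b_0)$. Finally, take the coding $\pi$ to be projection onto the first coordinate; then $\pi(\mathbf b)=\mathbf s^{(1)}=\mathbf a$.

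\emph{($\Leftarrow$)} Suppose $\mathbf a=\pi(\mathbf b)$ with $\mathbf b=\sigma^\omega(b)$. Since $\sigma^i(\mathbf b)=\mathbf b$ and $\sigma^i$ is $k^i$-uniform, the block $b_{k^in}\cdots b_{k^in+k^i-1}$ equals $\sigma^i(b_n)$. Consequently, for $0\le j<k^i$,
\[
b_{k^in+j}=g_{i,j}(b_n),
\]
where $g_{i,j}:\mathfrak B\to\mathfrak B$ sends a letter $x$ to the $j$-th letter of $\sigma^i(x)$. It follows that $(a_{k^in+j})_{n\geq 0}=(\pi\circ g_{i,j}(b_n))_{n\geq 0}$. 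Since there are at most $|\mathfrak B|^{|\mathfrak B|}$ maps $\mathfrak B\to\mathfrak B$, only finitely many such sequences arise. The boundary index $j=k^i$ in the definition of $K_k$ gives $(a_{k^i(n+1)})_{n\geq 0}$, which is a one-step shift of the $j=0$ sequence, so it adds at most finitely many further sequences. Hence $K_k(\mathbf a)$ is finite, and $\mathbf a$ is $k$-automatic by the kernel criterion.

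The main point requiring care is the ($\Rightarrow$) direction: one must check that the kernel is closed under the decimations $n\mapsto kn+j$, which is exactly the relation displayed at the start. One must also check that the fixed point is genuinely generated from the single letter $b_0$, which is the uniqueness argument via $n'<n$. The rest is bookkeeping.
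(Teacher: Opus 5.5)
Your argument is correct. The paper itself gives no proof of this statement: it quotes Cobham's theorem as classical background (cf.\ \cite{alloucheshallit}), so there is nothing in the paper to match step by step.

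It is worth comparing your route with the standard textbook proof. That proof works directly with the automaton.
\begin{itemize}
\item For ($\Rightarrow$) it takes $\mathfrak B=Q$, normalizes so that $\delta(q_0,0)=q_0$, and sets $\sigma(q)=\delta(q,0)\delta(q,1)\cdots\delta(q,k-1)$ and $\pi=\tau$.
\item For ($\Leftarrow$) it turns $(\sigma,\pi)$ into a DFAO whose transition $\delta(x,j)$ is the $j$-th letter of $\sigma(x)$.
\end{itemize}
That route is shorter and keeps the alphabet of size $|Q|$. However, it requires the automaton to read the most significant digit first. This paper's convention $(n)_k=w_0w_1\cdots w_r$ reads the least significant digit first, so one would also have to invoke the reversal theorem for automatic sequences.

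Your passage through the $k$-kernel avoids the reading-direction issue entirely. You also handle the paper's nonstandard range $0\le j\le k^i$ in $K_k$ correctly.
\begin{itemize}
\item In ($\Leftarrow$), the $j=k^i$ terms are one-step shifts of the $j=0$ terms, which you note.
\item In ($\Rightarrow$), closure of the kernel under $n\mapsto kn+j'$ needs the new offset to satisfy $k^ij'+j\le k^{i+1}$. This holds because $j\le k^i$ and $j'\le k-1$, and it deserves one explicit line.
\end{itemize}
The price of your approach is the larger alphabet $\Delta^m$ and reliance on the kernel criterion, which the paper also only states. There is no circularity, since that criterion is proved directly from automata rather than via Cobham's theorem.

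Your induction on $n=kn'+j$ with $n'<n$ is exactly the right way to identify the fixed point $\mathbf b$ with $\sigma^\omega(b_0)$.
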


\subsection{Stammering sequences} 
Let $\Delta$ be a finite alphabet. For a word $W$ over $\Delta$, we denote by $|W|$ its length. For any integer $\ell \geq 1$, we write $W^\ell$ for $W\cdots W$, taken $\ell$-times. More generally, if $x>0$ is a real number, we define
\[
W^x := W^{\lfloor x \rfloor} W',
\]
where $W'$ is the prefix of $W$ of length $\lceil (x-\lfloor x \rfloor)|W| \rceil$. Here $\lfloor \cdot \rfloor$ and $\lceil \cdot \rceil$ denote, respectively, the floor and ceiling functions.

Let $\mathbf{a}=(a_k)_{k \geq 0}$ be a sequence with values in $\Delta$, which we identify with the infinite word $a_0 a_1 a_2 a_3 \cdots$.

\begin{definition}\label{stammering defn}
Let $w>1$ be a real number. We say that $\mathbf{a}$ satisfies \emph{Condition $(*)_w$} if there exist sequences of finite words $(U_n)_{n \geq 1}$ and $(V_n)_{n \geq 1}$ over $\Delta$ such that:
\begin{enumerate}
\item For every $n \geq 1$, the word $U_n V_n^{\,w}$ is a prefix of $\mathbf{a}$;
\item The sequence $\bigl(|U_n|/|V_n|\bigr)_{n \geq 1}$ is bounded;
\item The sequence $(|V_n|)_{n \geq 1}$ is strictly increasing.
\end{enumerate}
A sequence $\mathbf{a}$ satisfying Condition $(*)_w$ for some $w>1$ is called a \emph{stammering sequence}.
\end{definition}
\noindent It is easy to see that if $\mathbf{a}$ is eventually periodic then it satisfies $(*)_w$ for every $w>1$.
\begin{proposition}\label{automatic implies stammering}
Let $\mathbf{a}=(a_k)_{k \geq 0}$ be a $k$-automatic sequence on a finite alphabet $\mathcal{A}$, then $\mathbf{a}$ is a stammering sequence.
\end{proposition}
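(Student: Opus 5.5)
We will follow the argument of Adamczewski and Bugeaud, using Cobham's theorem (Theorem \ref{Cobham theorem on morphisms}). Write $\mathbf{a}=\pi(\mathbf b)$, where $\mathbf b=\sigma^\omega(b)$ is the fixed point of a $k$-uniform morphism $\sigma$ on a finite alphabet $\mathfrak{B}$ and $\pi$ is a coding. Images under a coding preserve prefixes and lengths. So if $U_nV_n^{w}$ is a prefix of $\mathbf b$, then $\pi(U_n)\pi(V_n)^{w}$ is a prefix of $\mathbf a$, with the same length ratios. It therefore suffices to show that $\mathbf b$ satisfies Condition $(*)_w$ for some $w>1$.

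The key combinatorial step is a pigeonhole argument on the finite alphabet. Set $m:=|\mathfrak{B}|$ and look at the prefix $b_0b_1\cdots b_m$ of $\mathbf b$, which has length $m+1$. Some letter occurs twice in it: there are indices $0\le i<j\le m$ with $b_i=b_j$. Define
\[
U:=b_0\cdots b_{i-1},\qquad V:=b_i\cdots b_{j-1},
\]
so that $UVb_j$ is a prefix of $\mathbf b$ and $b_j=b_i$ is the first letter of $V$. Since $\sigma(\mathbf b)=\mathbf b$, applying $\sigma^n$ shows that
\[
U_n:=\sigma^n(U),\qquad V_n:=\sigma^n(V)
\]
make $U_nV_n\sigma^n(b_j)$ a prefix of $\mathbf b$. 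Here $\sigma^n(b_j)=\sigma^n(b_i)$ is a prefix of $\sigma^n(V)=V_n$ of length $k^n$.

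Uniformity gives the lengths exactly: $|U_n|=k^n i$, $|V_n|=k^n(j-i)$, and $|\sigma^n(b_j)|=k^n$. Thus $U_nV_n^{w}$ is a prefix of $\mathbf b$ with
\[
w:=1+\frac{1}{j-i}\ \ge\ 1+\frac1m>1.
\]
The ceiling in the definition of $V^x$ causes no trouble, because $\lceil |V_n|/(j-i)\rceil=k^n$ exactly. The ratio $|U_n|/|V_n|=i/(j-i)\le m$ is bounded, and $|V_n|=k^n(j-i)$ is strictly increasing because $k\ge2$. This verifies all three conditions of Definition \ref{stammering defn}.

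There is no serious obstacle. The only points needing care are that $\sigma^n$ applied to a prefix of a fixed point gives a prefix, which follows from $\sigma(\mathbf b)=\mathbf b$ and the morphism property, and that $w$ depends only on the fixed pair $(i,j)$ and not on $n$, which holds because the same pair is used for every $n$.
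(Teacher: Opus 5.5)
Your proposal is correct and follows essentially the same route as the paper: reduce via Cobham's theorem to the fixed point $\sigma^\omega(b)$, use pigeonhole to find a repeated letter in the prefix of length $|\mathfrak{B}|+1$, and take $U_n=\sigma^n(U)$, $V_n=\sigma^n(V)$, where $V$ starts at the first occurrence of the repeated letter. The only difference is cosmetic: you take $w=1+1/(j-i)$ (in the edge case $j-i=1$ this is $w=2$, and $V_n^2=V_n\sigma^n(b_j)$ directly), whereas the paper uses the uniform exponent $w=1+1/|\mathfrak{B}|$.
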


\begin{proof}
By Theorem \ref{Cobham theorem on morphisms} there exist a finite alphabet $\mathcal{B}$, a $k$-uniform morphism $\sigma:\mathcal{B}^* \to \mathcal{B}^*$, an infinite sequence $\mathbf{u}\in \mathcal{B}^\omega$ which is a fixed point of $\sigma$, and a coding $\phi:\mathcal{B}\to \mathcal{A}$ such that $\mathbf{a}=\phi(\mathbf{u})$. It is clear that if $\mathbf{u}$ satisfies Condition $(*)_w$, then so does $\mathbf{a}$. Thus it suffices to establish the result for $\mathbf{u}$.

Let $r=|\mathcal{B}|$. By the pigeonhole principle, the prefix of $\mathbf{u}$ of length $r+1$ contains two occurrences of the same letter. Hence it can be written in the form
\[
W_1\, u\, W_2\, u\, W_3,
\]
where $u \in \mathcal{B}$ and $W_1,W_2,W_3$ are (possibly empty) words over $\mathcal{B}$. For each integer $n \geq 1$, define
\[
U_n := \sigma^n(W_1), \qquad V_n := \sigma^n(uW_2).
\]
Because $V_n$ begins with $\sigma^n(u)$, we obtain that $U_n V_n^{\,1+1/r}$ is a prefix of $\mathbf{u}$.

We now verify the conditions of $(*)_w$ with $w=1+\frac{1}{r}$. Since $\sigma$ is $k$-uniform, we have $|\sigma^n(W)| = k^n |W|$ for any word $W$, and thus
\[
\frac{|U_n|}{|V_n|} = \frac{|W_1|}{|uW_2|} \leq \frac{|W_1|}{1+|W_2|} \leq r-1,
\]
so that the sequence $(|U_n|/|V_n|)$ is bounded. Moreover since $|V_n| = k^n |uW_2|$, it follows that $|V_n|$ is strictly increasing. Thus $\mathbf{u}$ satisfies Condition $(*)_{1+1/r}$, and therefore is a stammering sequence. The same holds for $\mathbf{a}$.
\end{proof}

\subsection{$r$-dimensional automatic arrays}
In this section we recall the notion of multidimensional automatic arrays. By way of example, first consider the $2$-dimensional case. Let $k,l \geq 2$ be integers, and let $\Sigma_k=\{0,1,\dots,k-1\}$ and $\Sigma_l=\{0,1,\dots,l-1\}$. We consider the product alphabet $\Sigma_k \times \Sigma_l$, consisting of pairs $[a,b]$ with $a\in \Sigma_k$ and $b\in \Sigma_l$. If
\[
w = [a_0,b_0][a_1,b_1][a_2,b_2]\cdots [a_j,b_j] \in (\Sigma_k \times \Sigma_l)^*,
\]
we define
\[
[w]_{k,l} := \bigl([a_0a_1\cdots a_j]_k,\; [b_0b_1\cdots b_j]_l\bigr), 
\]
where $[a_0a_1\cdots a_j]_k$ and $[b_0b_1\cdots b_j]_l$ are defined according to \eqref{bracket k notation}.
Conversely, given $(m,n) \in \mathbb{Z}_{\geq 0}^2$, write $(m)_k = a_0\cdots a_i$ and $(n)_l = b_0\cdots b_j$. We define a word $(m,n)_{k,l} \in (\Sigma_k \times \Sigma_l)^*$ by padding the shorter expansion with leading zeros, namely
\[
(m,n)_{k,l} =
\begin{cases}
[0,b_0]\cdots[0,b_{j-i}][a_0,b_{j-i+1}]\cdots[a_i,b_j], & \text{if } j \geq i,\\
[a_0,0]\cdots[a_{i-j},0][a_{i-j+1},b_0]\cdots[a_i,b_j], & \text{if } i>j.
\end{cases}
\]
\par More generally, let $r \geq 2$ be an integer, and let $k_1,\dots,k_r \geq 2$ be integers. For each $1 \leq i \leq r$, set $\Sigma_{k_i}=\{0,1,\dots,k_i-1\}$, and consider the product alphabet
\[
\Sigma := \Sigma_{k_1} \times \cdots \times \Sigma_{k_r}.
\]
Thus, elements of $\Sigma$ are $r$-tuples $[a^{(1)},\dots,a^{(r)}]$ with $a^{(i)} \in \Sigma_{k_i}$.

If
\[
w = [a^{(1)}_0,\dots,a^{(r)}_0]\cdots [a^{(1)}_j,\dots,a^{(r)}_j] \in \Sigma^*,
\]
we define
\begin{equation}\label{multi [w]}[w]_{k_1,\dots,k_r} := \bigl([a^{(1)}_0\cdots a^{(1)}_j]_{k_1},\dots,[a^{(r)}_0\cdots a^{(r)}_j]_{k_r}\bigr).
\end{equation}

Conversely, given $\mathbf{n}=(n_1,\dots,n_r) \in \mathbb{Z}_{\geq 0}^r$, write $(n_i)_{k_i} = a^{(i)}_1\cdots a^{(i)}_{\ell_i}$ for each $i$. Let $\ell = \max_i \ell_i$, and pad each expansion with leading zeros to obtain words of length $\ell$. This produces a word
\[
(\mathbf{n})_{k_1,\dots,k_r} \in \Sigma^*,
\]
defined by
\[
(\mathbf{n})_{k_1,\dots,k_r}
=
[a^{(1)}_0,\dots,a^{(r)}_0]\cdots [a^{(1)}_\ell,\dots,a^{(r)}_\ell],
\]
where each $(a^{(i)}_0,\dots,a^{(i)}_\ell)$ is the base-$k_i$ expansion of $n_i$ padded with leading zeros.

\begin{definition}
A \emph{$[k_1,\dots,k_r]$-DFAO} is a tuple
\[
M=(Q,\Sigma,\delta,q_0,\Delta,\tau),
\]
where $Q$ is a finite set of states, $\Sigma=\Sigma_{k_1}\times\cdots\times\Sigma_{k_r}$, $\delta:Q \times \Sigma \to Q$ is the transition function, $q_0 \in Q$ is the initial state, $\Delta$ is a finite output alphabet, and $\tau:Q \to \Delta$ is the output map.
\end{definition}

As before, $\delta$ extends to a map $\delta:Q \times \Sigma^* \to Q$.

\begin{definition}
A $[k_1,\dots,k_r]$-DFAO $M$ \emph{generates} an $r$-dimensional array
\[
\mathbf{c}=(c_{n_1,\dots,n_r})_{n_1,\dots,n_r \geq 0}
\]
over $\Delta$ if for all $\mathbf{n}=(n_1, \dots, n_r) \in \mathbb{Z}_{\geq 0}^r$, one has
\[
c_{n_1,\dots,n_r} = \tau\bigl(\delta(q_0,(\mathbf{n})_{k_1,\dots,k_r})\bigr).
\]
We say that $\mathbf{c}$ is \emph{$[k_1,\dots,k_r]$-automatic} if it is generated by such a DFAO. If $k_1=\cdots=k_r=k$, we simply say that $\mathbf{c}$ is \emph{$k$-automatic}.
\end{definition}

The following result generalizes \cite[Theorem 14.2.4]{alloucheshallit}.

\begin{theorem}\label{r-dim automatic}
For each $1 \leq i \leq r$, let $(a_n(i))_{n \geq 0}$ be a $k_i$-automatic sequence with values in a finite set $\Delta_i$. Let
\[
f:\Delta_1 \times \cdots \times \Delta_r \to \Delta
\]
be any map. Then the $r$-dimensional array $\mathbf{c}=(c_{n_1,\dots,n_r})$ defined by
\[
c_{n_1,\dots,n_r} := f\bigl(a_{n_1}(1),\dots,a_{n_r}(r)\bigr)
\]
is $[k_1,\dots,k_r]$-automatic.
\end{theorem}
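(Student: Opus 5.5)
The plan is a product construction of automata, with one preliminary normalization so that the padding with leading zeros in $(\mathbf{n})_{k_1,\dots,k_r}$ does not disturb the individual automata. For each $i$, fix a $k_i$-DFAO $\mathcal{A}_i=(Q_i,\Sigma_{k_i},\delta_i,q_0^{(i)},\Delta_i,\tau_i)$ generating $(a_n(i))_{n\geq 0}$. In the convention of the paper, $(n)_{k_i}=w_0w_1\cdots w_r$ is read starting with the least significant digit $w_0$. Padding with \emph{leading} zeros therefore appends zeros at the end of the input word (in the two-dimensional display, the padded zeros are placed at the high-order end of the shorter expansion). The first step is thus to replace $\mathcal{A}_i$ by an automaton whose output is insensitive to trailing zeros, i.e.\ one satisfying
\[
\tau_i\bigl(\delta_i(q_0^{(i)},w0^m)\bigr)=\tau_i\bigl(\delta_i(q_0^{(i)},w)\bigr)
\]
for all $w\in\Sigma_{k_i}^*$ and $m\geq 0$. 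We also need the word $0$ (encoding $n=0$), and words of the form $0^m$, to give $a_0(i)$.

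To carry out this normalization, I would use the standard fact (cf.\ \cite[\S4.3]{alloucheshallit}) that a $k$-automatic sequence is generated by a DFAO in which padding by zeros on the high-order side does not change the output. Concretely, I would modify the output function to
\[
\tau_i'(q):=\tau_i\bigl(\delta_i(q,0^{N})\bigr),
\]
where $N$ is chosen so that the sequence $q\mapsto\delta_i(q,0^m)$ becomes eventually periodic; one may take $N$ to be a multiple of the period beyond the preperiod. This alone does not fully work, because the value at the actual word must be correct, not merely the value after adding $N$ zeros. The clean fix is to pass to the subset/"reversal" construction. Alternatively, define the new state set as $Q_i\times$ a memory of the state reached at the last nonzero digit: let $Q_i'=Q_i\times Q_i$. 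Reading a digit $d$, the first coordinate always updates via $\delta_i$, and the second coordinate is set to the new first coordinate when $d\neq 0$ and kept otherwise. The output is $\tau_i$ of the second coordinate, and an initial adjustment handles the all-zero word, whose output is $a_0(i)$. Since $(n)_{k_i}$ ends in a nonzero digit when $n>0$, this automaton outputs $a_n(i)$ on every zero-padded representation of $n$. I expect this padding issue to be the main (really the only) subtle point; the rest is bookkeeping.

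With the normalized $\mathcal{A}_i'$ in hand, define the $[k_1,\dots,k_r]$-DFAO
\[
M=(Q_1'\times\cdots\times Q_r',\ \Sigma,\ \delta,\ (q_0'^{(1)},\dots,q_0'^{(r)}),\ \Delta,\ \tau),
\]
where the transitions act coordinatewise:
\[
\delta\bigl((q_1,\dots,q_r),[b^{(1)},\dots,b^{(r)}]\bigr)=\bigl(\delta_1'(q_1,b^{(1)}),\dots,\delta_r'(q_r,b^{(r)})\bigr),
\]
and the output is
\[
\tau(q_1,\dots,q_r)=f\bigl(\tau_1'(q_1),\dots,\tau_r'(q_r)\bigr).
\]
By induction on word length, the extended transition function also acts coordinatewise. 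Hence the state reached on $(\mathbf{n})_{k_1,\dots,k_r}$ has $i$-th coordinate $\delta_i'(q_0'^{(i)},\cdot)$ applied to the zero-padded base-$k_i$ expansion of $n_i$. By the normalization, the output of that coordinate is $a_{n_i}(i)$, so $M$ outputs $f(a_{n_1}(1),\dots,a_{n_r}(r))=c_{n_1,\dots,n_r}$. This shows that $\mathbf{c}$ is $[k_1,\dots,k_r]$-automatic, and it recovers \cite[Theorem 14.2.4]{alloucheshallit} when $r=2$.
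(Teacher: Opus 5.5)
Your argument is correct, and its core is the paper's own proof: the product DFAO on $Q_1\times\cdots\times Q_r$ with coordinatewise transitions and output $f(\tau_1,\dots,\tau_r)$. The difference is your preliminary normalization, which the paper omits entirely; it only asserts that ``it is easy to see that $M$ generates $\mathbf{c}$''. Your version is the more careful one, because that step is genuinely needed. For example, take $k_1=k_2=2$ and let $(a_n(1))$ be the parity of the number of $0$'s in $(n)_2$. This is generated by the two-state automaton with $\delta(E,0)=O$, $\delta(O,0)=E$, $\delta(q,1)=q$, $\tau(E)=0$, $\tau(O)=1$. Let $(a_n(2))$ be arbitrary and $f(x,y)=x$. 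Then $c_{1,2}=a_1(1)=0$, but in $(1,2)_{2,2}$ the first coordinate reads $10$, so the unnormalized product automaton outputs $1$. Remembering the state at the last nonzero digit repairs exactly this, since $(n)_{k_i}$ ends in a nonzero digit for $n>0$.

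A few small points. Make the ``initial adjustment'' explicit: start the memory coordinate at $\delta_i(q_0^{(i)},0)$, so that every word $0^m$ yields $\tau_i(\delta_i(q_0^{(i)},0))=a_0(i)$. You can simply drop the abandoned $\tau_i'$ and ``reversal'' detours. Your parenthetical also misdescribes the paper's two-dimensional display. That display places the padding zeros at the start of the word, which under the paper's least-significant-digit-first convention is the low-order end; read that way, $\mathbf{n}\mapsto(\mathbf{n})_{k_1,\dots,k_r}$ would not even be injective. The paper's conventions are inconsistent at that point. Your interpretation, appending zeros at the high-order end so that $[(\mathbf{n})_{k_1,\dots,k_r}]_{k_1,\dots,k_r}=\mathbf{n}$, is the right one to adopt.
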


\begin{proof}
For each $i$, let 
\[
M_i=(Q_i,\Sigma_{k_i},\delta_i,q_i,\Delta_i,\tau_i)
\]
be a DFAO generating $(a_n(i))_{n\geq 0}$. Define a DFAO
\[
M=(Q,\Sigma,\delta,q_0,\Delta,\tau)
\]
as follows. Set $Q=Q_1 \times \cdots \times Q_r$ and $q_0=(q_1,\dots,q_r)$. For
\[
\mathbf{q}=(q_1',\dots,q_r') \in Q, \quad [a^{(1)},\dots,a^{(r)}] \in \Sigma,
\]
define
\[
\delta(\mathbf{q},[a^{(1)},\dots,a^{(r)}])
:=
\bigl(\delta_1(q_1',a^{(1)}),\dots,\delta_r(q_r',a^{(r)})\bigr).
\]
Finally, define
\[
\tau(q_1',\dots,q_r') := f\bigl(\tau_1(q_1'),\dots,\tau_r(q_r')\bigr).
\]
It is easy to see that $M$ generates $\mathbf{c}$, and the result follows.
\end{proof}
\section{Diophantine tools}

In this section we collect several results from Diophantine approximation that will be used later. We begin with classical one-dimensional approximation results, and then introduce the language of absolute values and heights, ending with Schlickewei's refinement of Schmidt's Subspace Theorem.

\subsection{Dirichlet and Roth}

We first recall Dirichlet's theorem on Diophantine approximation.

\begin{theorem}[Dirichlet]
Let $\alpha \in \mathbb{R}$ and $Q \geq 1$. Then there exist integers $p,q$ with $1 \leq q \leq Q$ such that
\[
\left| \alpha - \frac{p}{q} \right| \leq \frac{1}{qQ}.
\]
In particular, there exist infinitely many rational numbers $p/q$ such that
\[
\left| \alpha - \frac{p}{q} \right| < \frac{1}{q^2}.
\]
\end{theorem}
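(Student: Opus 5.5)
The plan is to obtain both assertions from the theory of continued fractions, or, more directly, from a pigeonhole argument applied to the fractional parts $\{q\alpha\}$. I will assume first that $Q$ is an integer; for real $Q\geq 1$, replace $Q$ by $N=\lfloor Q\rfloor$ and keep track of the resulting loss.

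For the main step, take $N=\lfloor Q\rfloor$ and consider the $N+1$ numbers $0=\{0\cdot\alpha\},\{\alpha\},\dots,\{N\alpha\}$ in $[0,1)$, together with the point $1$. Partition $[0,1]$ into $N+1$ intervals of length $1/(N+1)$.
\begin{itemize}
\item If two of the fractional parts $\{q_1\alpha\},\{q_2\alpha\}$ with $q_1<q_2$ lie in the same interval, set $q=q_2-q_1\in[1,N]$ and $p=\lfloor q_2\alpha\rfloor-\lfloor q_1\alpha\rfloor$. Then $|q\alpha-p|\leq 1/(N+1)$.
\item If they all lie in distinct intervals, then some $\{q\alpha\}$ falls in the last interval. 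Taking $p=\lfloor q\alpha\rfloor+1$ gives the same bound.
\end{itemize}
Dividing by $q$ yields $|\alpha-p/q|\leq 1/(q(N+1))$. Since $N+1>Q$, this gives $|\alpha-p/q|<1/(qQ)\leq 1/(qQ)$, which is the first assertion.

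For the "in particular" statement, note that $q\leq Q$ gives $1/(qQ)\leq 1/q^2$, so each $Q$ yields a fraction with $|\alpha-p/q|\leq 1/q^2$, and in fact strictly less than $1/q^2$ by the strict inequality above. It remains to see that infinitely many distinct fractions arise, and here one must split into cases.
\begin{itemize}
\item If $\alpha$ is irrational, then each found fraction has $|\alpha-p/q|>0$. Choose $Q$ so large that $1/Q$ is smaller than all the errors of the finitely many fractions obtained so far. The new fraction satisfies $|\alpha-p/q|<1/(qQ)\leq 1/Q$, so it must be new, and iterating produces infinitely many.
\item If $\alpha=a/b$ is rational, the statement should be read with infinitely many pairs $(p,q)$, not necessarily in lowest terms. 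Taking $p/q=ka/(kb)$ does not help for large $k$. Instead, use the approximations $p/q$ with $q=1$, $p=\lfloor\alpha\rfloor$ and similar choices, or simply note that the claim is intended for irrational $\alpha$, which is the only case used later.
\end{itemize}

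The main subtlety is the rational case just described. For rational $\alpha$ only finitely many reduced fractions satisfy $|\alpha-p/q|<1/q^2$ (for $p/q\neq\alpha$ one has $|\alpha-p/q|\geq 1/(bq)$), so I would state the second assertion for irrational $\alpha$. Everything else is routine pigeonhole.
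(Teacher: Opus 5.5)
The paper gives no proof of this statement: Dirichlet's theorem is quoted as classical background and is never actually used in the proof of the main theorem, so there is no argument of the authors' to compare with. Your pigeonhole argument on $\{0\cdot\alpha\},\{\alpha\},\dots,\{N\alpha\}$ with $N=\lfloor Q\rfloor$ is the standard proof, and it is correct. It even yields the sharper bound $|q\alpha-p|\leq 1/(N+1)<1/Q$.

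One small point should be made explicit. In your second case, the index $q$ whose fractional part lands in the last interval is nonzero. This holds because $\{0\cdot\alpha\}=0$ lies in the first interval, which is distinct from the last one since $N\geq 1$. Your argument for infinitely many distinct approximations when $\alpha$ is irrational is also fine: you choose $1/Q$ below all previous errors.

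What needs fixing is your discussion of rational $\alpha=a/b$. You are right that the ``in particular'' clause fails for rational $\alpha$ if it means infinitely many \emph{distinct} rational numbers. Any $p/q\neq\alpha$ satisfies $|\alpha-p/q|\geq 1/(bq)$, so $|\alpha-p/q|<1/q^2$ forces $q<b$, and only finitely many candidates remain. Under that reading the clause must be restricted to irrational $\alpha$.

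However, your remark that the pairs $(p,q)=(ka,kb)$ ``do not help for large $k$'' is wrong. They give $|\alpha-p/q|=0<1/q^2$ for every $k\geq 1$, so if the statement counts pairs $(p,q)$, the rational case is trivial. Your suggestion of $q=1$, $p=\lfloor\alpha\rfloor$ ``and similar choices'' yields only finitely many approximations and should be dropped.
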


Dirichlet's theorem shows that every real number admits very good rational approximations. For algebraic numbers, however, such approximations cannot be substantially improved.

\begin{theorem}[Roth]
Let $\alpha$ be an irrational algebraic number. Then for every $\varepsilon > 0$, the inequality
\[
\left| \alpha - \frac{p}{q} \right| < \frac{1}{q^{2+\varepsilon}}
\]
has only finitely many solutions in rational numbers $p/q$.
\end{theorem}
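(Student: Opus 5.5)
The plan is to prove Roth's theorem in the form stated, namely that for an irrational algebraic $\alpha$ and any $\varepsilon>0$ only finitely many rationals $p/q$ satisfy $|\alpha-p/q|<q^{-2-\varepsilon}$. I will follow the classical auxiliary polynomial method and not attempt anything new. Throughout, let $d\geq 2$ be the degree of $\alpha$ and $P$ its minimal polynomial over $\Z$. Argue by contradiction: suppose there are infinitely many such approximations. Fix a large integer $m$ and a small parameter $\delta>0$, both depending on $\varepsilon$ and $d$. Pick approximations $p_1/q_1,\dots,p_m/q_m$ with $q_1$ huge and each $\log q_{h+1}/\log q_h$ huge. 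Set $r_1$ large and $r_h:=\lceil r_1\log q_1/\log q_h\rceil$, so that $q_h^{r_h}$ is roughly independent of $h$.

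\textbf{Step 1 (Siegel's lemma).} Construct a nonzero polynomial $A(X_1,\dots,X_m)\in\Z[X_1,\dots,X_m]$ of degree at most $r_h$ in $X_h$, whose coefficients have height at most $B^{r_1+\cdots+r_m}$ for some constant $B=B(\alpha)$. Require that $A$ vanish at $(\alpha,\dots,\alpha)$ to high weighted index: every derivative $\partial^{i_1+\cdots+i_m}A/\partial X_1^{i_1}\cdots\partial X_m^{i_m}$ with $\sum_h i_h/r_h\leq m(1/2-\delta)$ vanishes at $(\alpha,\dots,\alpha)$. A concentration-of-measure count on the simplex shows that the number of linear conditions over $\Q(\alpha)$ is at most a small fraction of the number of unknowns. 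Siegel's lemma then supplies such an $A$.

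\textbf{Step 2 (upper bound at the rational point).} Taylor-expand a suitable derivative $A_{\mathbf{i}}$ of $A$ about $(\alpha,\dots,\alpha)$ and use $|\alpha-p_h/q_h|<q_h^{-2-\varepsilon}$. This gives $|A_{\mathbf{i}}(p_1/q_1,\dots,p_m/q_m)|\leq C^{\sum r_h}\,q_1^{-r_1(2+\varepsilon)(m(1/2-\delta)-\theta)}$ for derivatives of small index $\leq\theta$. On the other hand, $q_1^{r_1}\cdots q_m^{r_m}A_{\mathbf{i}}(p/q)$ is an integer. So if $A_{\mathbf{i}}(p/q)\neq 0$, we get the lower bound $|A_{\mathbf{i}}(p/q)|\geq q_1^{-mr_1}$ roughly. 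For small $\delta,\theta$ and large $q_1$ these two bounds contradict each other.

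\textbf{Step 3 (Roth's lemma, the main obstacle).} It remains to show that some derivative of small weighted index $\theta$ (with $\theta\to 0$ as the growth conditions strengthen) does not vanish at $(p_1/q_1,\dots,p_m/q_m)$. This nonvanishing is the heart of the proof. I would prove it by induction on $m$ using generalized Wronskians. Write $A$ as $\sum\phi_j(X_1,\dots,X_{m-1})\psi_j(X_m)$ with the $\phi_j$ and the $\psi_j$ each linearly independent and the number of terms minimal. Factor a nonzero Wronskian-type determinant as $U(X_1,\dots,X_{m-1})\,V(X_m)$. Then apply the induction hypothesis to $U$ and Gauss's lemma together with the height bounds to $V$. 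The height bounds come from Step 1, and the rapid growth of the $r_h$ and the $q_h$ is used here. The delicate part is the bookkeeping showing that the index bound passes from $U$ and $V$ back to $A$, and I expect this to take most of the effort.

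Given Step 3, Step 2 yields the contradiction, so only finitely many approximations exist.
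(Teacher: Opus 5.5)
The paper gives no proof of this statement. Roth's theorem is only recalled in \S3 as background and is never used afterwards; the main argument relies solely on the Subspace Theorem (Theorem~\ref{thm subspace}). Within the paper's framework, Roth's theorem is the special case $K=\Q$, $S=\{\infty\}$, $m=2$, $L_1=x_1-\alpha x_2$, $L_2=x_2$, applied to $\mathbf{x}=(p,q)$ with $\gcd(p,q)=1$. Here $H(\mathbf{x})\asymp q$, and the product of the normalized forms is $|p-q\alpha|\,q/(p^2+q^2)<q^{-2-\varepsilon}$, which is at most $H(\mathbf{x})^{-2-\varepsilon/2}$ for large $q$. Each of the finitely many resulting lines through the origin determines at most one ratio $p/q$.

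You instead follow Roth's original argument: Siegel's lemma for an auxiliary polynomial of high index at $(\alpha,\dots,\alpha)$, Taylor expansion played against integrality at the rational point, and Roth's lemma by induction via generalized Wronskians and Gauss's lemma. The outline is sound. The normalization $q_h^{r_h}\approx q_1^{r_1}$, the upper bound in Step 2, and the contradiction once $\delta$ and $\theta/m$ are small compared with $\varepsilon$ are all correct. Citing the theorem buys brevity, but it rests on a far deeper result whose proof uses your argument as its prototype. Your route is self-contained, but essentially all of the difficulty sits in Step 3, which you have only sketched.

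When you write it out, fix the following points.
\begin{itemize}
\item The count in Step 1 is over lattice points of the box $\prod_h[0,r_h]$ lying below the hyperplane $\sum_h i_h/r_h=m(1/2-\delta)$, not over a simplex. It is a large-deviation bound for a sum of $m$ independent uniform variables.
\item Roth's lemma uses rapid \emph{decay} of the degrees, $r_{h+1}\le\omega r_h$, not rapid growth.
\item Roth's lemma also needs a height hypothesis $H(A)\le q_1^{\omega r_1}$ with $\omega$ depending on $m$ and $\theta$. Your bound $B^{r_1+\cdots+r_m}\le B^{mr_1}$ meets it once $q_1\ge B^{m/\omega}$. So the parameters must be fixed in the order $(m,\delta)$, then $\theta$, then $\omega$, then $q_1,\dots,q_m$, then $r_1$.
\item Take the $p_h/q_h$ in lowest terms, so that $q_mX_m-p_m$ is primitive when you bound the index of $V$ via Gauss's lemma.
\end{itemize}
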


Roth's theorem is a far-reaching refinement of earlier results of Thue and Siegel. A higher-dimensional and powerful generalization is the Subspace Theorem, which we now describe.

\subsection{Absolute values and heights}

Let $\mathbf{K}$ be an algebraic number field of degree $d=[\mathbf{K}:\mathbb{Q}]$, and let $M(\mathbf{K})$ denote the set of (normalized) places of $\mathbf{K}$. For $v \in M(\mathbf{K})$ and $x \in \mathbf{K}$, we define an absolute value $|\cdot|_v$ as follows:
\begin{enumerate}
\item[(i)] If $v$ corresponds to a real embedding $\sigma:\mathbf{K}\hookrightarrow \mathbb{R}$, set
\[
|x|_v := |\sigma(x)|^{1/d}.
\]
\item[(ii)] If $v$ corresponds to a pair of complex embeddings $\sigma,\overline{\sigma}:\mathbf{K}\hookrightarrow \mathbb{C}$, set
\[
|x|_v := |\sigma(x)|^{2/d} = |\overline{\sigma}(x)|^{2/d}.
\]
\item[(iii)] If $v$ corresponds to a nonzero prime ideal $\mathfrak{p} \subset \mathcal{O}_{\mathbf{K}}$, set
\[
|x|_v := (N\mathfrak{p})^{-\operatorname{ord}_{\mathfrak{p}}(x)/d}.
\]
\end{enumerate}

With this normalization, the family $\{|\cdot|_v\}_{v \in M(\mathbf{K})}$ satisfies the \emph{product formula}
\[
\prod_{v \in M(\mathbf{K})} |x|_v = 1 \qquad \text{for all } x \in \mathbf{K}^\times.
\]

For $\mathbf{x}=(x_1,\dots,x_n) \in \mathbf{K}^n \setminus \{\mathbf{0}\}$ and $v \in M(\mathbf{K})$, define
\[
|\mathbf{x}|_v :=
\begin{cases}
\left( \sum_{i=1}^n |x_i|_v^{2d} \right)^{1/(2d)}, & \text{if } v \text{ is real infinite},\\[0.5em]
\left( \sum_{i=1}^n |x_i|_v^{d} \right)^{1/d}, & \text{if } v \text{ is complex infinite},\\[0.5em]
\max\{|x_1|_v,\dots,|x_n|_v\}, & \text{if } v \text{ is finite}.
\end{cases}
\]

\begin{definition}
The (multiplicative) \emph{height} of $\mathbf{x} \in \mathbf{K}^n \setminus \{\mathbf{0}\}$ is defined by
\[
H(\mathbf{x}) := \prod_{v \in M(\mathbf{K})} |\mathbf{x}|_v.
\]
\end{definition}

This definition is independent of the choice of coordinates up to multiplication by a bounded factor, and plays a central role in Diophantine geometry. For each place $v$ of, fix an extension $|\cdot |_v$ to $\bar{\Q}$.

\subsection{The Subspace Theorem}

We now state the Subspace Theorem, due to Schmidt and its refinement by Schlickewei, which can be viewed as a multidimensional generalization of Roth's theorem.

\begin{theorem}[Subspace Theorem]\label{thm subspace}
Let $\mathbf{K}$ be an algebraic number field, and let $m \geq 2$ be an integer. Let $S \subset M(\mathbf{K})$ be a finite set of places containing all infinite places. For each $v \in S$, let
\[
L_{1,v},\dots,L_{m,v}
\]
be linearly independent linear forms in $m$ variables with algebraic coefficients. Let $\varepsilon>0$.

Then the set of $\mathbf{x} \in \mathbf{K}^m \setminus \{\mathbf{0}\}$ satisfying
\[
\prod_{v \in S} \prod_{i=1}^m \frac{|L_{i,v}(\mathbf{x})|_v}{|\mathbf{x}|_v}
\leq H(\mathbf{x})^{-m-\varepsilon}
\]
is contained in finitely many proper linear subspaces of $\mathbf{K}^m$.
\end{theorem}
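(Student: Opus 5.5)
This is the Schmidt--Schlickewei Subspace Theorem, and we do not reprove it in full. The plan below follows the classical strategy of Schmidt, with Schlickewei's adaptation to several places (as in Evertse's exposition). There are four steps.

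\emph{Step 1: reductions.} First we normalize the linear forms. By a linear change of variables at each place and a standard argument on projective heights, we may assume the coefficients of all $L_{i,v}$ lie in a fixed finite extension of $\mathbf{K}$. We may also assume each system $\{L_{1,v},\dots,L_{m,v}\}$ has determinant of absolute value $1$. Next we discretize: for a solution $\mathbf{x}$, write $|L_{i,v}(\mathbf{x})|_v/|\mathbf{x}|_v = H(\mathbf{x})^{-c_{iv}}$ with $\sum_{v,i} c_{iv}\geq m+\varepsilon$ (after the usual shift by $1$ coming from the product formula). Since $\mathbf{x}\neq 0$, the exponents $c_{iv}$ are bounded above. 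Hence they can be rounded to a finite grid, and it suffices to treat solutions with a fixed tuple $(c_{iv})$. For such $\mathbf{x}$, and with $Q=H(\mathbf{x})$, the point $\mathbf{x}$ lies in the adelic parallelepiped
\[
\Pi(Q)=\{\mathbf{y}: |L_{i,v}(\mathbf{y})|_v\le Q^{-c_{iv}}\ (v\in S),\ |\mathbf{y}|_v\le 1\ (v\notin S)\}.
\]

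\emph{Step 2: geometry of numbers.} We apply the adelic Minkowski theorem (Bombieri--Vaaler) to $\Pi(Q)$. By Step 1, the volume of $\Pi(Q)$ is about $Q^{-\varepsilon}$, so the successive minima satisfy $\lambda_1\cdots\lambda_m\asymp Q^{-\varepsilon}$, while $\lambda_1\le 1$ because $\mathbf{x}\in\Pi(Q)$. The key quantity is the largest index $j$ with $\lambda_j$ small compared to $\lambda_{j+1}$. Passing to exterior powers and using Davenport's lemma and Mahler's compound-body theorem, we reduce to the following claim. If solutions with arbitrarily large heights are not contained in finitely many subspaces, then for some $j$ there are infinitely many ``special'' subspaces $T(Q)$, spanned by the first $j$ minima, whose heights go to infinity. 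This is the Roth-type situation in the Grassmannian.

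\emph{Step 3: the auxiliary polynomial.} This is the main obstacle and the heart of Schmidt's proof. Choose many solutions $\mathbf{x}_1,\dots,\mathbf{x}_h$ with rapidly increasing heights $Q_1\ll Q_2\ll\cdots$. Using Siegel's lemma, construct a multihomogeneous polynomial $P(\mathbf{X}_1,\dots,\mathbf{X}_h)$ of multidegree $(d_1,\dots,d_h)$ with $d_jh(\mathbf{x}_j)$ roughly constant. It should have small integral coefficients and vanish to high weighted index along the linear forms $L_{i,v}$; a counting argument using the condition $\sum c_{iv}>m$ shows such $P$ exists. Then bound a suitable derivative of $P$ at $(\mathbf{x}_1,\dots,\mathbf{x}_h)$ (or at the subspaces $T(Q_j)$) at every $v\in S$, and combine with the product formula. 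This forces the derivative to vanish, i.e.\ the index of $P$ at the point is large. To get a contradiction we need a nonvanishing result: Roth's lemma, or in its strongest form Faltings' product theorem. It says a polynomial of small height cannot have large index at a point whose coordinates have heights growing fast relative to the degrees. The delicate parts are the bookkeeping of indices at finite places, which is Schlickewei's contribution, and passing from points to subspaces in the Grassmannian setting.

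\emph{Step 4: induction.} The contradiction in Step 3 shows that for each $j$ only finitely many special subspaces $T(Q)$ arise. Descending induction on $j$, together with the observation that solutions lying in a fixed proper subspace reduce to a lower-dimensional instance of the theorem, then shows that all solutions lie in finitely many proper subspaces of $\mathbf{K}^m$. For the application in this paper, only the statement as formulated in the theorem is needed, so we would simply cite Schmidt and Schlickewei for these details.
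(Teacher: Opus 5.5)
The paper gives no proof of this statement: it quotes it as a known theorem of Schmidt and Schlickewei, so deferring to those references, as you ultimately do, is exactly the paper's approach, and your outline follows Schmidt's classical route. If you keep the sketch, fix the sign in Step~2: Minkowski's second theorem gives $\lambda_1\cdots\lambda_m\asymp \operatorname{vol}(\Pi(Q))^{-1}\asymp Q^{\varepsilon}$, not $Q^{-\varepsilon}$, and it is this, together with $\lambda_1\le 1$, that forces $\lambda_m$ to be at least a positive power of $Q$ and so produces the gap between consecutive minima that Steps~3--4 rely on (relatedly, $\Pi(Q)$ must be defined with the shifted exponents, summing to about $\varepsilon$ rather than $m+\varepsilon$, for its volume to be $\asymp Q^{-\varepsilon}$ and for a suitably rescaled $\mathbf{x}$ to lie in it).
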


\section{Multivariable arrays generated by $r$ automatic sequences}

\par In this section we prove Theorem \ref{main thm}. Fix $r\in \Z_{\geq 2}$ and let $a_1,a_2,\dots,a_r \in \Z_{\geq 2}$ be distinct integers which are multiplicatively independent, i.e., $\log a_1,\dots,\log a_r$ are linearly independent over $\Q$.

For each $1 \leq i \leq r$, let $(p_n(i))_{n\geq 0}$ be a bounded sequence with values in $\Z$, and assume that $(p_n(i))$ is $k_i$-automatic for some integer $k_i \in \Z_{\geq 2}$. Let
\[
f:\Z^r \to \Z
\]
be a function, and define an $r$-dimensional array $\mathbf{c}=(c_{n_1,\dots,n_r})$ by
\[
c_{n_1,\dots,n_r} := f\bigl(p_{n_1}(1),\dots,p_{n_r}(r)\bigr).
\]
We note that each of the sequences $(p_n(i))$ is bounded and hence so are the values $\{c_{n_1,\dots,n_r}\mid \mathbf n=(n_1, \dots, n_r)\in \Z_{\geq 0}^r\}$. It follows from Theorem~\ref{r-dim automatic} that $\mathbf{c}$ is a $[k_1,\dots,k_r]$-automatic array with values in a bounded subset $\Delta \subset \Z$.
\par Since each of the sequences $(p_n(i))_{n\geq 0}$, $1 \leq i \leq r$, is automatic, it follows from Proposition \ref{automatic implies stammering} that each satisfies Condition $(*)_{w_i}$ for some real number $w_i>1$. We set $w$ to be minimum value among $w_1, \dots, w_r$. Thus, for each $1 \leq i \leq r$, there exist sequences of finite words $(U_n(i))_{n\geq 1}$ and $(V_n(i))_{n\geq 1}$ such that:
\begin{itemize}
    \item for every $n\geq 1$, the word $U_n(i)\, V_n(i)^{\,w}$ is a prefix of the infinite word $p_0(i)p_1(i)p_2(i)\cdots$;
    \item the sequence $\bigl(|U_n(i)|/|V_n(i)|\bigr)_{n\geq 1}$ is bounded;
    \item the sequence $(|V_n(i)|)_{n\geq 1}$ is strictly increasing.
\end{itemize}
\noindent For $1 \leq i \leq r$ and $n\geq 1$, we set
\[
r_n(i):=|U_n(i)|\quad \text{and}\quad s_n(i):=|V_n(i)|.
\]
For a fixed $n\geq 1$, we construct auxiliary sequences $(p_k^{(n)}(i))_{k\geq 0}$ by extending the prefix $U_n(i)V_n(i)$ periodically. More precisely, for each $1 \leq i \leq r$, define
\[
p_k^{(n)}(i) :=
\begin{cases}
p_k(i), & 0 \leq k < r_n(i)+s_n(i),\\
p_{\,r_n(i)+h}(i), & k = r_n(i)+h + j s_n(i),\;\; 0 \leq h < s_n(i),\;\; j \geq 1.
\end{cases}
\]
\noindent Note that each sequence $(p_k^{(n)}(i))$ is eventually periodic, with preperiod $r_n(i)$ and period dividing $s_n(i)$.

\par Define an $r$-dimensional array
\[
c^{(n)}_{n_1,\dots,n_r} := f\bigl(p^{(n)}_{n_1}(1),\dots,p^{(n)}_{n_r}(r)\bigr),
\]
and set
\[
\alpha_n := \sum_{n_1,\dots,n_r \geq 0} \frac{c^{(n)}_{n_1,\dots,n_r}}{a_1^{n_1}\cdots a_r^{n_r}}.
\]

\medskip

\noindent We now express $\alpha_n$ in terms of geometric series. Using the eventual periodicity of each sequence $(p_k^{(n)}(i))$, it follows that $\alpha_n$ is a rational number. More precisely, one checks that $\alpha_n$ can be written as a fraction whose denominator is
\[
\prod_{i=1}^r a_i^{r_n(i)}\bigl(a_i^{s_n(i)}-1\bigr),
\]
and whose numerator is $P_n(a_1,\dots,a_r)$ for some polynomial \[P_n(X_1,\dots,X_r) \in \Z[X_1,\dots,X_r]
\] whose coefficients lie in a bounded set independent of $n$.

\begin{proposition}\label{repn of alphan}
With notation as above, for each $n\geq 1$ there exists a polynomial 
\[
P_n(X_1,\dots,X_r)\in \mathbb{Z}[X_1,\dots,X_r]
\]
such that
\[
\alpha_n
=
\frac{P_n(a_1,\dots,a_r)}
{\displaystyle \prod_{i=1}^r a_i^{r_n(i)}\bigl(a_i^{s_n(i)}-1\bigr)}.
\]
Moreover, for each $1 \leq i \leq r$, the degree of $P_n$ in $X_i$ is bounded by $r_n(i)+s_n(i)$, and its coefficients belong to a fixed finite subset of $\mathbb{Z}$, depending only on $f$ and the sequences $(p_n(i))$, but independent of $n$.
\end{proposition}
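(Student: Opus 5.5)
The plan is to compute $\alpha_n$ explicitly as a multiple geometric series that splits coordinate by coordinate. The natural finite index set is
\[
B_n := \prod_{i=1}^r \{0,1,\dots,r_n(i)+s_n(i)-1\}.
\]
Every $\mathbf m=(m_1,\dots,m_r)\in\Z_{\ge 0}^r$ can be written uniquely in the form $m_i = t_i + j_i s_n(i)$. Here either $t_i<r_n(i)$ and $j_i=0$, or $r_n(i)\le t_i<r_n(i)+s_n(i)$ and $j_i\ge 0$.

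By definition of $p^{(n)}(i)$ we have $p^{(n)}_{m_i}(i)=p_{t_i}(i)$, and hence $c^{(n)}_{\mathbf m}=c_{t_1,\dots,t_r}$. This depends only on $\mathbf t\in B_n$. The series converges absolutely because $\mathbf c$ is bounded, so we may regroup it:
\[
\alpha_n=\sum_{\mathbf t\in B_n} c_{\mathbf t}\prod_{i=1}^r \frac{\epsilon_i(t_i)}{a_i^{t_i}}.
\]
Here $\epsilon_i(t_i)=1$ if $t_i<r_n(i)$. Otherwise
\[
\epsilon_i(t_i)=\sum_{j\ge0}a_i^{-js_n(i)}=\frac{a_i^{s_n(i)}}{a_i^{s_n(i)}-1}.
\]

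Next I will clear denominators. I multiply the expression by $D_n=\prod_i a_i^{r_n(i)}(a_i^{s_n(i)}-1)$ and look at each factor of a summand separately.

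For $t_i<r_n(i)$, the $i$-th factor becomes $a_i^{r_n(i)-t_i}(a_i^{s_n(i)}-1)$. This is a polynomial in $a_i$ with coefficients in $\{0,\pm1\}$ and degree at most $r_n(i)+s_n(i)$.

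For $r_n(i)\le t_i<r_n(i)+s_n(i)$, the $i$-th factor becomes $a_i^{r_n(i)+s_n(i)-t_i}$. This is a single monomial of degree at most $s_n(i)$.

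So $D_n\alpha_n=P_n(a_1,\dots,a_r)$ with
\[
P_n(X)=\sum_{\mathbf t\in B_n} c_{\mathbf t}\prod_i Q_{i,t_i}(X_i),
\]
where $Q_{i,t}$ is the polynomial described in the two cases above. Each $Q_{i,t}$ has integer coefficients, and its degree in $X_i$ is at most $r_n(i)+s_n(i)$, which gives the degree bound.

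The main obstacle is the claim that the coefficients of $P_n$ lie in a fixed finite set independent of $n$. Collecting the monomials, the coefficient of $X^{\mathbf e}$ is a sum of terms $\pm c_{\mathbf t}$. For each $i$, the exponent $e_i$ is hit by at most two values of $t_i$: the term $X_i^{r_n(i)-t_i}$ from the first case, the term $X_i^{r_n(i)+s_n(i)-t_i}$ from either case, with the two cases covering complementary ranges of $t_i$.

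Fix $i$ and an exponent $e_i$. The term $X_i^{r_n(i)+s_n(i)-t_i}$ produces $e_i$ for exactly one $t_i=r_n(i)+s_n(i)-e_i$. The term $X_i^{r_n(i)-t_i}$ produces $e_i$ for exactly one $t_i=r_n(i)-e_i$, and only when $e_i\ge 1$. Hence each coefficient is a signed sum of at most $2^r$ values $c_{\mathbf t}$.

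Since $|c_{\mathbf t}|\le C:=\max|f|$ on the bounded range of the $p$'s, every coefficient lies in $\{-2^rC,\dots,2^rC\}$. This set depends only on $f$, $r$ and the sequences, and not on $n$. I expect the only delicate part is doing this exponent bookkeeping cleanly. Writing $P_n$ as a product-form sum and counting preimages of each exponent coordinatewise, as above, handles it.
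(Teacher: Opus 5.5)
Your proposal is correct and follows essentially the same route as the paper. Your coordinatewise split of each index into a preperiodic part ($t_i<r_n(i)$, $j_i=0$) or a periodic part carrying a geometric-series factor is the paper's decomposition $\alpha_n=\sum_I S_I$ over subsets $I\subseteq\{1,\dots,r\}$, written in product form, and your $P_n=\sum_{\mathbf t}c_{\mathbf t}\prod_i Q_{i,t_i}(X_i)$ is the paper's $\sum_I P_{n,I}\prod_{i\notin I}(X_i^{s_n(i)}-1)$ expanded; your exponent-preimage count justifies the $2^rM$ coefficient bound more carefully than the paper's one-line ``hence'', which, estimated naively term by term, would only give $3^rM$ (still enough for the statement).
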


\begin{proof}
\par In order to better illustrate the argument, we first prove the result when $r=2$. In order to simplify notation, we use $(m, n)$ in place of $(n_1, n_2)$ and $(a,b)$ in place of $(a_1, a_2)$. Also set $p_n:=p_n(1)$ and $q_n:=p_n(2)$. Thus, $\alpha$ is given by 
\[\alpha=\sum_{m, n\geq 0}\frac{c_{m, n}}{a^m b^n}.\]Note that $c_{m, n}=f(p_m, q_n)$. Let $M>0$ be such that $c_{m,n}\in [-M, M]$ for all $m$ and $n$. Write
\[
\alpha_n = S_1 + S_2 + S_3 + S_4,
\]
where the sums are over points $\Z^2$ in the regions
\[
[0,r_n)\times[0,r'_n), \quad [0,r_n)\times[r'_n,\infty), \quad
[r_n,\infty)\times[0,r'_n), \quad [r_n,\infty)\times[r'_n,\infty)
\]
respectively.
\par The term
\[
S_1 = \sum_{0\leq i<r_n}\sum_{0\leq j<r'_n} \frac{c_{i,j}^{(n)}}{a^i b^j}
\]
is a finite sum, hence can be written as
\[
S_1 = \frac{P_{n,1}(a,b)}{a^{r_n} b^{r'_n}},
\]
where $P_{n,1}(X,Y)\in \mathbb{Z}[X,Y]$ has $X$-degree $\leq r_n$ and $Y$-degree $\leq r_n'$ with coefficients lying in $[-M, M]$. For $S_2$, using periodicity in the $j$-direction, we obtain
\[
S_2 = \sum_{0\leq i<r_n} \sum_{t=0}^{s'_n-1}
\frac{f(p_i^{(n)},q_{r'_n+t}^{(n)})}{a^i b^{r'_n+t}} \cdot \frac{1}{1-b^{-s'_n}}.
\]
Thus
\[
S_2 = \frac{P_{n,2}(a,b)}{a^{r_n} b^{r'_n}(b^{s'_n}-1)},
\]
for some polynomial $P_{n,2}\in \mathbb{Z}[X,Y]$ with $X$-degree $\leq r_n$ and $Y$-degree $\leq s_n'$. Similarly,
\[
S_3 = \frac{P_{n,3}(a,b)}{a^{r_n}(a^{s_n}-1) b^{r'_n}},
\]
for some $P_{n,3}\in \mathbb{Z}[X,Y]$ with $X$-degree $\leq s_n$ and $Y$-degree $\leq r_n'$. Note that by construction, $P_{n,2}$ and $P_{n, 3}$ have coefficients in $\Z$ that are of the form $f(p_m, q_n)$ and hence lie in $[-M, M]$. Using periodicity in both directions, we have
\begin{align*}
S_4
&= \sum_{t=0}^{s_n-1}\sum_{u=0}^{s'_n-1}
\frac{f(p_{r_n+t}^{(n)},q_{r'_n+u}^{(n)})}{a^{r_n+t} b^{r'_n+u}}
\cdot \frac{1}{(1-a^{-s_n})(1-b^{-s'_n})} \\
&= \frac{P_{n,4}(a,b)}{a^{r_n}(a^{s_n}-1)\, b^{r'_n}(b^{s'_n}-1)},
\end{align*}
for some polynomial $P_{n,4}\in \mathbb{Z}[X,Y]$ with $X$-degree $\leq s_n$ and $Y$-degree $\leq s_n'$.
\par Putting everything over the common denominator
\[
a^{r_n}(a^{s_n}-1)\, b^{r'_n}(b^{s'_n}-1),
\]
we obtain
\[
\alpha_n = \frac{P_n(a,b)}{a^{r_n}(a^{s_n}-1)\, b^{r'_n}(b^{s'_n}-1)},
\]
where
\[
P_n(X,Y) = P_{n,1}(X,Y)(X^{s_n}-1)(Y^{s'_n}-1)
+ P_{n,2}(X,Y)(X^{s_n}-1)
+ P_{n,3}(X,Y)(Y^{s'_n}-1)
+ P_{n,4}(X,Y).
\]
\noindent It is clear from the construction that $P_n(X,Y)\in \mathbb{Z}[X,Y]$, that its $X$-degree is $\leq r_n+s_n$ and $Y$-degree $\leq r_n'+s_n'$ respectively. Further, the coefficients of $P_n(X,Y)$ lie in the set $[-4M, 4M]$. 

\par Now let's complete the argument for all values of $r$. Let $M>0$ be such that
\[
c_{n_1,\dots,n_r} = f\bigl(p_{n_1}(1),\dots,p_{n_r}(r)\bigr)\in [-M,M]
\]
for all $(n_1,\dots,n_r)\in \Z_{\geq 0}^r$.

\par We decompose $\alpha_n$ according to whether each coordinate lies before or after the preperiod. For each subset $I \subseteq \{1,\dots,r\}$, define
\[
S_I := \sum_{\substack{n_i < r_n(i)\ \text{if } i\notin I \\ n_i \geq r_n(i)\ \text{if } i\in I}}
\frac{c^{(n)}_{n_1,\dots,n_r}}{a_1^{n_1}\cdots a_r^{n_r}}.
\]

Then
\[
\alpha_n = \sum_{I \subseteq \{1,\dots,r\}} S_I.
\]

\par Fix $I \subseteq \{1,\dots,r\}$. For $i \in I$, we use the periodicity of $(p_k^{(n)}(i))$ beyond $r_n(i)$, while for $i \notin I$ we sum over a finite range. Writing $n_i = r_n(i)+h_i + j_i s_n(i)$ with $0 \leq h_i < s_n(i)$ and $j_i \geq 0$ for $i\in I$, we obtain
\[S_I
=
\sum_{\substack{0 \leq n_i < r_n(i)\ (i\notin I)}}\;
\sum_{\substack{0 \leq h_i < s_n(i)\ (i\in I)}}
\frac{c^{(n)}_{\tilde{n}_1,\dots,\tilde{n}_r}}{\prod_{i=1}^r a_i^{\tilde n_i}}
\cdot
\prod_{i\in I} \left(\sum_{j_i\geq 0} a_i^{-j_i s_n(i)}\right),\]
where $\tilde n_i = n_i$ if $i\notin I$ and $\tilde n_i = r_n(i)+h_i$ if $i\in I$.

\par Evaluating the geometric series, we obtain
\[
\sum_{j_i\geq 0} a_i^{-j_i s_n(i)} = \frac{1}{1-a_i^{-s_n(i)}},
\]
and hence
\[
S_I
=
\frac{P_{n,I}(a_1,\dots,a_r)}
{\displaystyle \prod_{i\notin I} a_i^{r_n(i)} \prod_{i\in I} a_i^{r_n(i)}\bigl(a_i^{s_n(i)}-1\bigr)},
\]
for some polynomial $P_{n,I}\in \Z[X_1,\dots,X_r]$ with coefficients in $[-M, M]$ and for which 
\[\op{deg}_{X_i}P_{n, I}\leq \begin{cases}
    s_n(i)\text{ for }i\in I;\\
    r_n(i)\text{ for }i\notin I.
\end{cases}\]

\par Summing over all subsets $I \subseteq \{1,\dots,r\}$ and putting everything over the common denominator
\[
\prod_{i=1}^r a_i^{r_n(i)}\bigl(a_i^{s_n(i)}-1\bigr),
\]
we obtain
\[
\alpha_n
=
\frac{P_n(a_1,\dots,a_r)}
{\displaystyle \prod_{i=1}^r a_i^{r_n(i)}\bigl(a_i^{s_n(i)}-1\bigr)},
\]
where
\[
P_n(X_1,\dots,X_r)
=
\sum_{I \subseteq \{1,\dots,r\}} 
P_{n,I}(X_1,\dots,X_r)
\prod_{i\notin I} (X_i^{s_n(i)}-1).
\]

\medskip

It is clear from the construction that $P_n \in \Z[X_1,\dots,X_r]$. Moreover, for each $i$, the $X_i$-degree is bounded by $r_n(i)+s_n(i)$. Finally, the coefficients of all polynomials $P_{n,I}$ belong to $[-M,  M]$, Hence the coefficients of $P_n$ belong to $[-2^r M, 2^r M]$.
\end{proof}

\begin{proof}[Proof of Theorem \ref{main thm}]
    Assume that $\alpha\in \bar{\Q}$ then we show that $\alpha\in \Q$.
    
    \par In order to better illustrate the method, we first complete the proof when $r=2$. In keeping with the notation in the proof of the previous result, we use $(m, n)$ in place of $(n_1, n_2)$ and $(a,b)$ in place of $(a_1, a_2)$. Let $S_0$ be the set of all finite primes dividing $ab$. We identify $S_0$ with nonarchimedian places of $\Q$ and set $S:=S_0\cup \{\infty\}$. Let $\mathbf{x}:=(x_1, \dots, x_5)$. We define $5$ linear forms $L_{v,i}$ for $i=1, \dots, 5$ for each $v\in S$. If $v\in S_0$, we set $L_{v,i}(\mathbf x):=x_i$. On the other hand if $v=\infty$ we set 
    \[L_{\infty, 5}(\mathbf{x}):=\alpha(x_1+x_2+x_3+x_4)+x_5\] and 
    \[L_{\infty,i}(\mathbf x):=x_i\]for $i=1, \dots, 4$. We then take 
    \[\mathbf x_n:=\left(a^{r_n+s_n}b^{r_n'+s_n'}, -a^{r_n}b^{r_n'+s_n'}, -a^{r_n+s_n}b^{r_n'}, a^{r_n} b^{r_n'}, -P_n(a,b)\right).\]
    From Proposition \ref{repn of alphan}, we find that 
    \[\begin{split}L_{\infty, 5}(\mathbf x_n)=&\left(\alpha a^{r_n}(a^{s_n}-1)\, b^{r'_n}(b^{s'_n}-1)-P_n(a,b)\right)\\
    =& a^{r_n}(a^{s_n}-1)\, b^{r'_n}(b^{s'_n}-1)\left(\alpha-\alpha_n\right)\\
    =& a^{r_n}(a^{s_n}-1)\, b^{r'_n}(b^{s'_n}-1) \sum_{\substack{i\geq r_n+\lceil w s_n \rceil +1\\ j\geq r_n'+\lceil w s_n' \rceil +1}} \frac{c_{i,j}-c_{i,j}^{(n)}}{a^i b^j}\\
    =& O\left(\frac{1}{a^{(w-1)s_n} b^{(w-1)s_n'}}\right).
    \end{split}\]
    We wish to estimate the product
$$
\Pi:=\prod_{v \in S} \prod_{i=1}^{5} \frac{\left|L_{v,i}\left(\mathbf{x}_{n}\right)\right|_{v}}{\left|\mathbf{x}_{n}\right|_{v}}=\prod_{v\in S} \prod_{i=1}^4 |L_{v,i}(\mathbf{x}_n)|_v\times \prod_{v\in S} \frac{\left|L_{v,5}\left(\mathbf{x}_{n}\right)\right|_v}{\left|\mathbf{x}_{n}\right|_{v}^5}.
$$
Note that for $v\notin S$ and $i\leq 4$ we have that $|L_{v,i}(\mathbf{x}_n)|_v=1$.
Thus by the product formula, 
\[\prod_{v\in S} \prod_{i=1}^4 |L_{v,i}(\mathbf{x}_n)|_v=1.\]
Since the polynomial $P_n(X,Y)$ has integer coefficients it follows that $|L_{v,5}(\mathbf{x}_n)|_v\leq 1$ for any $v\in S_0$. Thus we find that 
\[\Pi\ll a^{-(w-1)s_n} b^{-(w-1)s_n'}\prod_{v\in S} |\mathbf x_n|_v^{-5}=a^{-(w-1)s_n} b^{-(w-1)s_n'}H(\mathbf x_n)^{-5}.\]
\noindent Note that for each nonarchimedian place $v$, $|\mathbf x_n|_v\leq 1$ and therefore, 
\[H(\mathbf x_n)\leq |\mathbf x_n|_\infty.\]
On the other hand, since $P_n(X,Y)$ has absolutely bounded coefficients and degree at most $r_n+s_n$ in $X$ and $r_n'+s_n'$ in $Y$, it follows that 
\[|\mathbf x_n|_\infty\ll (r_n+s_n)(r_n'+s_n') a^{r_n+s_n}b^{r_n'+s_n'}.\] Let $C>0$ be an absolute constant such that $r_n<C s_n$ and $r_n'<C s_n'$. We find that 
\[H(\mathbf x_n)\ll (r_n+s_n)(r_n'+s_n') (a^{s_n}b^{s_n'})^{C+1}\]
i.e., 
\[a^{-s_n}b^{-s_n'}\ll (r_n+s_n)^{1/(C+1)}(r_n'+s_n')^{1/(C+1)} H(\mathbf x_n)^{-1/(C+1)}\]
Therefore we find that
\[a^{-(w-1)s_n} b^{-(w-1)s_n'}\ll \left((r_n+s_n)(r_n'+s_n') H(\mathbf x_n)^{-1}\right)^{\frac{w-1}{C+1}}.\]
Note that \[H(\mathbf x_n)\gg a^{r_n+s_n} b^{r_n'+s_n'}\] and therefore, for any constant $C'>0$, we have that 
\[H(\mathbf x_n)\gg \left((r_n+s_n)(r_n'+s_n')\right)^{C'}.\]
Therefore, we find that 
\[(r_n+s_n)(r_n'+s_n')\ll H(\mathbf x_n)^{\delta}
\] for any $\delta>0$. 
Thus for any
$\epsilon\in (0, \frac{w-1}{C+1})$, we have that 
\[\left((r_n+s_n)(r_n'+s_n') H(\mathbf x_n)^{-1}\right)^{\frac{w-1}{C+1}}\ll H(\mathbf x_n)^{-\epsilon}. \]
We deduce that \[\Pi\ll H(\mathbf x_n )^{-5-\epsilon}\] for all $n$. Consequently infinitely many of the $\mathbf x_n$ lie on the same hyperplane. Passing to a subsequence, assume that all $\mathbf x_n$ satisfy 
\[(u_1, u_2, u_3, u_4, u_5)\cdot \mathbf x_n=0\] where the vector $\mathbf u=(u_1, u_2, u_3, u_4, u_5)\in \mathbb Q^5$ is not zero. 
Thus we find that 
\[u_1 a^{r_n+s_n}b^{r_n'+s_n'}-u_2a^{r_n}b^{r_n'+s_n'} -u_3 a^{r_n+s_n}b^{r_n'}+u_4 a^{r_n} b^{r_n'}-u_5P_n(a,b)=0.\]
Suppose that $u_5\neq 0$. Note that 
\[\lim_{n\rightarrow \infty} \frac{P_n(a,b)}{a^{r_n+s_n} b^{r'_n+s'_n}}=\lim_{n\rightarrow \infty} \frac{P_n(a,b)}{a^{r_n}(a^{s_n}-1)\, b^{r'_n}(b^{s'_n}-1)}=\lim_{n\rightarrow \infty} \alpha_n=\alpha.\]
Thus dividing by $a^{r_n+s_n} b^{r'_n+s'_n}$ and letting $n\rightarrow \infty$ we find that 
\[u_5 \alpha= u_1\] and so $\alpha\in \Q$. Next suppose that $u_5=0$. Then the same argument gives $u_1=u_5\alpha=0$. Thus we find that for all $n$, 
\[u_2b^{s_n'} +u_3 a^{s_n}=u_4 .\] Note that both $u_2$ and $u_3$ must be non-zero, otherwise it is easy to conclude that all $u_i=0$. If $u_4\neq 0$ then divide by $u_4$ to get 
\[ A x+By=1\] where $A, B$ are fixed rational numbers and $x:=b^{s_n'} $, $y:=a^{s_n}$ belong to finitely generated subgroup of $\Q^\times$. It then follows that there are only finitely many solutions to this equation, see \cite[Theorem 3.1, Ch.8]{lang}. Thus we find that $u_4=0$. In this case, 
\[u_2 b^{s_n'}=-u_3 a^{s_n}.\] In particular, 
\[(s_n'-s_m') \log b-(s_n-s_m) \log a=0,\] which is a contradiction. Thus we have shown that $\alpha\in \Q$.    

\par Next consider the $r$-dimensional case. Let $S_0$ be the set of all finite primes dividing $a_1\cdots a_r$, and set $S:=S_0\cup\{\infty\}$. We consider the vector
\[
\mathbf{x}:=(x_1,\dots,x_{2^r},x_{2^r+1}) \in \Q^{2^r+1},
\]
whose first $2^r$ coordinates correspond to the $2^r$ terms arising from the decomposition over subsets $I\subseteq \{1,\dots,r\}$, and whose last coordinate will specialize to $-P_n(a_1,\dots,a_r)$.

\par For $v\in S_0$ and $1\leq i\leq 2^r+1$, define
\[
L_{v,i}(\mathbf{x}) := x_i.
\]
For $v=\infty$, define
\[
L_{\infty,2^r+1}(\mathbf{x})
:=
\alpha\Bigl(\sum_{i=1}^{2^r} x_i\Bigr) + x_{2^r+1},
\qquad
L_{\infty,i}(\mathbf{x}) := x_i \quad (1\leq i\leq 2^r).
\]
\noindent Order the collection of subsets $I\subset \{1, \dots, r\}$ by $I_1, \dots, I_{2^r}$. For each $n\geq 1$, define $\mathbf{x}_n$ as follows. For each subset $I\subseteq \{1,\dots,r\}$, set
\[
x_I :=
(-1)^{|I|}\prod_{i\notin I} a_i^{r_n(i)+s_n(i)} \prod_{i\in I} a_i^{r_n(i)},
\]
and define
\[
\mathbf{x}_n := \bigl( (x_I)_{I\subseteq \{1,\dots,r\}},\; -P_n(a_1,\dots,a_r)\bigr).
\]
In other words, for $i\geq 2^r$ set $x_i:=x_{I_i}$ and $x_{2^r+1}:=-P_n(a_1, \dots, a_r)$.
\par By Proposition \ref{repn of alphan} we find that
\[
L_{\infty,2^r+1}(\mathbf{x}_n)
=
\left(\alpha \prod_{i=1}^r a_i^{r_n(i)}(a_i^{s_n(i)}-1) - P_n(a_1,\dots,a_r)\right)
=
\prod_{i=1}^r a_i^{r_n(i)}(a_i^{s_n(i)}-1)\,(\alpha-\alpha_n).
\]
\noindent Arguing as in the two-dimensional case, we have
\[
\alpha-\alpha_n
=
\sum_{\substack{n_i \geq r_n(i)+\lceil w s_n(i)\rceil+1 \\ 1\leq i\leq r}}
\frac{c_{n_1,\dots,n_r}-c^{(n)}_{n_1,\dots,n_r}}{a_1^{n_1}\cdots a_r^{n_r}},
\]
and hence
\[
L_{\infty,2^r+1}(\mathbf{x}_n)
=
O\!\left(\prod_{i=1}^r a_i^{-(w-1)s_n(i)}\right).
\]

\par We now estimate
\[
\Pi
:=
\prod_{v\in S}\prod_{i=1}^{2^r+1}
\frac{|L_{v,i}(\mathbf{x}_n)|_v}{|\mathbf{x}_n|_v}.
\]
As before, for $i\leq 2^r$ and $v\notin S$, we have $|L_{v,i}(\mathbf{x}_n)|_v=1$, and hence by the product formula,
\[
\prod_{v\in S}\prod_{i=1}^{2^r} |L_{v,i}(\mathbf{x}_n)|_v = 1.
\]
Moreover, since $P_n$ has integer coefficients, we have $|L_{v,2^r+1}(\mathbf{x}_n)|_v \leq 1$ for $v\in S_0$. Thus
\[
\Pi \ll \prod_{i=1}^r a_i^{-(w-1)s_n(i)} \cdot H(\mathbf{x}_n)^{-(2^r+1)}.
\]

\par On the other hand, since $P_n$ has bounded coefficients and degree $\ll r_n(i)+s_n(i)$ in each variable, we obtain
\[
H(\mathbf{x}_n)
\ll
\Bigl(\prod_{i=1}^r (r_n(i)+s_n(i))\Bigr)
\prod_{i=1}^r a_i^{r_n(i)+s_n(i)}.
\]
Using $r_n(i)\ll s_n(i)$, we deduce
\[
H(\mathbf{x}_n)
\ll
\Bigl(\prod_{i=1}^r (r_n(i)+s_n(i))\Bigr)
\prod_{i=1}^r a_i^{(C+1)s_n(i)}
\]
for some constant $C>0$.

\par It follows that there exists $\varepsilon>0$ such that
\[
\prod_{i=1}^r a_i^{-(w-1)s_n(i)}
\ll H(\mathbf{x}_n)^{-\varepsilon},
\]
and hence for all $n$,
\[
\Pi \ll H(\mathbf{x}_n)^{-(2^r+1+\varepsilon)}.
\]
\noindent Theorem \ref{thm subspace} then implies that infinitely many of the $\mathbf{x}_n$ lie in a proper subspace. Passing to a subsequence, we may assume that
\[
\sum_{i=1}^{2^r} u_i x_{I_i} + u_{2^r+1} x_{2^r+1} = 0
\]
for all $n$, where not all $u_i$ vanish. We set $u_{I_i}:=u_i$ for ease of notation.

\par Suppose first that $u_{2^r+1}\neq 0$. Dividing by $\prod_{i=1}^r a_i^{r_n(i)+s_n(i)}$ and letting $n\to\infty$, we obtain
\[
u_{2^r+1}\alpha = u_{J},
\]
where $J=\emptyset$ and hence $\alpha\in \Q$.

\par Suppose now that $u_{2^r+1}=0$. Then
\[
\sum_{I} u_I x_I = 0,
\]
where $I$ ranges over the subsets of $\{1, \dots, r\}$. Dividing by $\prod_{i=1}^r a_i^{r_n(i)}$, we obtain a relation of the form
\[
\sum_{I} u_I \prod_{i\notin I} a_i^{s_n(i)} = 0.
\]
Let $I_0$ be such that $u_{I_0}\neq 0$. We find that 
\[\sum_{I\neq I_0} -\frac{u_I }{u_{I_0}}\frac{\prod_{i\notin I} a_i^{s_n(i)}}{\prod_{j\notin I_0} a_j^{s_n(j)}}=1.\]
Let $\widetilde{S}$ be a finite set of places containing $S$ such that $u_I/u_{I_0}$ are $\widetilde{S}$-units.
As a consequence of the generalized $S$-unit theorem \cite[Corollary 7.4.3]{BomGub} there is a finite set $\Phi$ such that for all $n$, there is a set $I\neq I_0$ such that 
\[\frac{\prod_{i\notin I} a_i^{s_n(i)}}{\prod_{j\notin I_0} a_j^{s_n(j)}}\in \Phi.\] Thus by the Pigeon hole principle, there exists a set $I'\neq I_0$ such that 
\[\frac{\prod_{i\notin I} a_i^{s_n(i)}}{\prod_{j\notin I_0} a_j^{s_n(j)}}\in \Phi\] for infinitely many $n$. Therefore there exists $n\neq m$ such that 
\[\frac{\prod_{i\notin I} a_i^{s_n(i)}}{\prod_{j\notin I_0} a_j^{s_n(j)}}=\frac{\prod_{i\notin I} a_i^{s_m(i)}}{\prod_{j\notin I_0} a_j^{s_m(j)}}.\]
Therefore
\[\sum_{i\in I} (s_n(i)-s_m(i))\log a_i-\sum_{j\in I_0} (s_n(j)-s_m(j))\log a_i=0.\] We write this relation as 
\[\sum_I w_i \log a_i=0,\] where 
\[w_i:=\begin{cases}
    s_n(i)-s_m(i) & \text{ if } i\in I\setminus I_0;\\
    s_m(i)-s_n(i) & \text{ if } i\in I_0\setminus I;\\
    0& \text{ otherwise}.
\end{cases}\]
Note that the sequence $(s_n(i))_n$ is strictly increasing and thus in particular, $s_n(i)\neq s_m(i)$ for all $i=1, \dots, r$. Since $I\neq I_0$, not all coefficients are $0$ and this gives us a contradiction since $\log a_1, \dots, \log a_r$ are $\Q$-linearly independent. Thus $\alpha\in \Q$ if $\alpha\in \bar{\Q}$ and this completes the proof.
\end{proof}

\bibliographystyle{alpha}
\bibliography{references}

\end{document}